\documentclass[1p]{elsarticle}

\usepackage{latexsym,amssymb,amsmath,amsthm,amsfonts,amscd}
\usepackage{enumerate}
\usepackage{graphicx}
\usepackage{tikz,tkz-graph,tkz-berge}
\usetikzlibrary{matrix,arrows,calc}
\usepackage{multicol}
\usepackage{multirow}
\usepackage{tabularx}
\usepackage{listings}
\usepackage{color}
\usepackage{hyperref}
\usepackage{kbordermatrix}

\definecolor{deepblue}{rgb}{0,0,0.5}
\definecolor{deepred}{rgb}{0.6,0,0}
\definecolor{deepgreen}{rgb}{0,0.5,0}
\definecolor{cof}{RGB}{220,180,150}
\definecolor{pur}{RGB}{186,146,162}
\definecolor{greeo}{RGB}{91,173,69}
\definecolor{greet}{RGB}{52,111,72}

\newtheorem{theorem}{Theorem}
\newtheorem{lemma}[theorem]{Lemma}

\newtheorem{proposition}[theorem]{Proposition}
\newtheorem{corollary}[theorem]{Corollary}

\theoremstyle{definition}
\newtheorem{definition}[theorem]{Definition}

\newcommand{\diag}{\operatorname{diag}}
\newcommand{\SNF}{\operatorname{SNF}}

\def \rank {\operatorname{rank}}
\DeclareMathOperator{\tr}{trs}

\title{The Smith normal form of Laplacian matrices of simplicial annuli and high dimensional trees}

\author[CA]{Carlos A. Alfaro}
\ead[CA]{alfaromontufar@gmail.com}
\address[CA]{
Banco de M\'exico,
Ciudad de M\'exico, M\'exico.
}
\author[UM]{Jesús Uriel Medrano}
\ead[UM]{jesusurielmedrano@gmail.com}
\address[UM]{
Departamento de Matemáticas, Facultad de Ciencias\\
Universidad Nacional Autónoma de México\\
Mexico City, Mexico.
}
\author[JP]{Juan Pablo Serrano}
\ead[JP]{jpserranop@math.cinvestav.mx}
\address[JP]{
Departamento de Matemáticas\\
Cinvestav\\
Mexico City, Mexico.
}
\author[IT]{Iván Téllez Téllez}
\ead[IT]{ivan.tellez@uaslp.mx}
\address[IT]{
Facultad de Economía,\\
Universidad Autónoma de San Luis Potosí,\\
San Luis Potosí, México.
}
\author[RV]{Ralihe R. Villagr\'an}
\ead[RV]{ralihevillagran@gmail.com}
\address[RV]{
Department of Mathematical Sciences, 
Worcester Polytechnic Institute, 
Worcester, USA.
}

\begin{document}


\begin{abstract}
Inspired by the generalization of the formula of determinant of the distance matrix of trees to $k$-trees, obtained in \cite{MR5026280}, we study the Smith normal form of Laplacian matrices associated with some simplicial complexes.
We find relations between sandpile groups of adjacency graphs and the Smith normal form of Laplacian matrices of simplicial complexes.
We use such relations to calculate the Smith normal form of the highest Laplacian matrix of simplicial annuli and $k$-trees.
We also provide numerical experiments to visualize how good are these algebraic invariants to distinguish $k$-trees.
Finally, we point out that the Graham-Lovász-Pollak matrix, used to compute the determinant of the distance matrix of trees, can be used in the context of Laplacian matrices of trees and block graphs.
\end{abstract}

\begin{keyword}
Sandpile group, $k$-trees, simplicial annulus, Smith normal form, Laplacian matrix, Graham-Lovász-Pollak matrix
\end{keyword}

\maketitle


Let $V=\{v_1,\dots,v_{n}\}$.
The elements of $V$ are called {\it vertices}.
A $k$-{\it simplex} is a set of $k$ vertices and its {\it dimension} is $k-1$.
A {\it simplicial complex} $\Sigma$, or {\it complex} for short, is a collection of nonempty simplices on $V$ such that if $\alpha\in\Sigma$, and $\beta\subset\alpha$, then $\beta\in\Sigma$.

\begin{figure}[h!]
    \centering
    \begin{tabular}{c@{\extracolsep{2cm}}c}
        \begin{tikzpicture}[thick,scale=0.9]
        \coordinate (A1) at (0:3);
        \coordinate (A2) at (60:3);
        \coordinate (A3) at (120:3);
        \coordinate (A4) at (180:3);
        \coordinate (A5) at (240:3);
        \coordinate (A6) at (300:3);
        \coordinate (B1) at (0:1.3);
        \coordinate (B2) at (120:1.3);
        \coordinate (B3) at (240:1.3);

        \draw[fill=cof] (A6) -- (B1) -- (A1) -- (A6);
        \draw[fill=cof] (A2) -- (B1) -- (A1) -- (A2);
        \draw[fill=cof] (A2) -- (B2) -- (A3) -- (A2);
        \draw[fill=cof] (A4) -- (B2) -- (A3) -- (A4);
        \draw[fill=cof] (A4) -- (B3) -- (A5) -- (A4);
        \draw[fill=cof] (A6) -- (B3) -- (A5) -- (A6);
        \draw[fill=cof] (B1) -- (B2) -- (A2) -- (B1);
        \draw[fill=cof] (B2) -- (B3) -- (A4) -- (B2);
        \draw[fill=cof] (B3) -- (B1) -- (A6) -- (B3);

        \end{tikzpicture}
        &
        \begin{tikzpicture}[thick,scale=0.7]
    	\tikzstyle{every node}=[minimum width=0pt, inner sep=2pt, circle]
    	\draw   (0:3) node (v11) [draw,label=0:{\footnotesize $v_1$}] {};
    	\draw  (60:3) node (v12) [draw,label=45:{\footnotesize $v_2$}] {};
    	\draw (120:3) node (v13) [draw,label=135:{\footnotesize $v_3$}] {};
    	\draw (180:3) node (v14) [draw,label=180:{\footnotesize $v_4$}] {};
    	\draw (240:3) node (v15) [draw,label=225:{\footnotesize $v_5$}] {};
        \draw (300:3) node (v16) [draw,label=315:{\footnotesize $v_6$}] {};
    	\draw (v11) -- (v12) -- (v13) -- (v14) -- (v15) -- (v16) -- (v11);
        \draw   (0:1.3) node (v21) [draw,label=180:{\footnotesize $v_7$}] {};
    	\draw (120:1.3) node (v22) [draw,label=275:{\footnotesize $v_8$}] {};
    	\draw (240:1.3) node (v23) [draw,label=85:{\footnotesize $v_9$}] {};
    	\draw (v21) -- (v22) -- (v23) -- (v21);
        \draw (v21) -- (v11);
        \draw (v16) -- (v21) -- (v12);
        \draw (v22) -- (v13);
        \draw (v12) -- (v22) -- (v14);
        \draw (v23) -- (v15);
        \draw (v14) -- (v23) -- (v16);
    	\end{tikzpicture}
    \end{tabular}
    
    \caption{A simplicial annulus and its 1-skeleton.}
    \label{fig:complexandunderlying}
\end{figure}

Recall that the {$d$-\it skeleton} of a complex $\Sigma$ is the collection of all simplices of $K$ of dimension at most $d$. 
In particular, the {\it 1-skeleton} of a complex $\Sigma$ is the graph $G(\Sigma)$ defined from the simplices of dimension 0 and 1, see Fig. \ref{fig:complexandunderlying} for an example.
The 1-skeleton of a complex can be computed in {\tt sagemath}\footnote{https://sagecell.sagemath.org} from a {\tt SimplicialComplex} with the {\tt graph()} function.
Consider the following {\tt sagemath} code.
\begin{lstlisting}
O = SimplicialComplex([[6,7,1], [2,7,1], [2,8,3], [4,8,3], [4,9,5], [6,9,5], [7,8,2], [8,9,4], [9,7,6]])
G = O.graph()
G.show()
\end{lstlisting}

Recall that a $d$-{\it clique} $C$ in a graph $G$ is a subset of $d$ vertices of $G$ such that every pair of distinct vertices in $C$ are adjacent.
There is a bijective correspondence between the simplices in $\Sigma$ and a subset $\mathcal{C}$ of the cliques of the 1-skeleton $G(\Sigma)$.
Therefore, a complex $\Sigma$ can be described completely by the 1-skeleton $G(\Sigma)$ by specifying the set $\mathcal{C}$ of {\it allowed cliques} from $\Sigma$.
It is important to note that given the 1-skeleton is not always possible to reconstruct the complex if such allowed cliques are not specified, since some cliques in the 1-skeleton are not necessarily in the complex, consider for example the clique formed by the vertices $v_7$, $v_8$ and $v_9$ in the 1-skeleton in Fig.~\ref{fig:complexandunderlying}.
Therefore, for simplicity, sometimes we will describe a simplicial complex $\Sigma$ by its 1-skeleton $G$ and the set $\mathcal{C}$ of allowed cliques.
For further information on algebraic topology and graph theory we refer the reader to the books \cite{MR256911} and \cite{MR755006}, respectively.

Let $\Sigma$ be a simplicial complex described by the pair $(G,\mathcal{C})$, and let $k$ be the maximum size of a clique in $\mathcal{C}$. 
For $d\in \{1,\dots,k-1\}$, two $d$-cliques $\alpha$ and $\alpha'$ in $\mathcal{C}$ are {\it adjacent} if they belong to the same $(d+1)$-clique $\beta$, in such situation $\alpha$ and $\alpha'$ are {\it incident} to $\beta$. 
The {\it degree} $\deg(\alpha)$ of a $d$-clique $\alpha$ is the number of $d$-cliques adjacent with $\alpha$.
Let $c_d$ denote the number of $d$-cliques in $\mathcal{C}$, and assume that $\alpha_1,\dots,\alpha_{c_d}$ are the $d$-cliques of $\mathcal{C}$.
Then the $d$-{\it adjacency matrix} $A^d(\Sigma)$ of $\Sigma$ is the $c_d\times c_d$ matrix, indexed by the $d$-cliques of $\mathcal{C}$, such that the $(i,j)$-entry is
\[
A^d(\Sigma)_{ij}=
\begin{cases}
    1 & \text{if } \alpha_i \text{ and } \alpha_j \text{ are adjacent, and}\\
    0 & \text{otherwise.}
\end{cases} 
\]
Therefore, the $d$-{\it Laplacian matrix} $L^d(\Sigma)$ of $\Sigma$ is $\diag(\deg(\alpha_1),\dots,\deg(\alpha_{c_d}))-A^d(\Sigma)$.
Sometimes, we will refer to $L^d(\Sigma)$ as the {\it $d$-th Laplacian matrix} of $\Sigma$.
Note that for $d=1$, the 1-adjacency and 1-Laplacian matrices of $\Sigma$ coincide with the adjacency and Laplacian matrices of the 1-skeleton $G$ of $\Sigma$.
The 2-Laplacian matrix of the complex shown in Fig.~\ref{fig:complexandunderlying} is
\[
\tiny
\kbordermatrix{
   & 12 & 16 & 17 & 23 & 27 & 28 & 34 & 38 & 45 & 48 & 49 & 56 & 59 & 67 & 69 & 78 & 79 & 89\\
12 &  2 &  0 & -1 &  0 & -1 &  0 &  0 &  0 &  0 &  0 &  0 &  0 &  0 &  0 &  0 &  0 &  0 &  0 \\
16 &  0 &  2 & -1 &  0 &  0 &  0 &  0 &  0 &  0 &  0 &  0 &  0 &  0 & -1 &  0 &  0 &  0 &  0 \\
17 & -1 & -1 &  4 &  0 & -1 &  0 &  0 &  0 &  0 &  0 &  0 &  0 &  0 & -1 &  0 &  0 &  0 &  0 \\
23 &  0 &  0 &  0 &  2 &  0 & -1 &  0 & -1 &  0 &  0 &  0 &  0 &  0 &  0 &  0 &  0 &  0 &  0 \\
27 & -1 &  0 & -1 &  0 &  4 & -1 &  0 &  0 &  0 &  0 &  0 &  0 &  0 &  0 &  0 & -1 &  0 &  0 \\
28 &  0 &  0 &  0 & -1 & -1 &  4 &  0 & -1 &  0 &  0 &  0 &  0 &  0 &  0 &  0 & -1 &  0 &  0 \\
34 &  0 &  0 &  0 &  0 &  0 &  0 &  2 & -1 &  0 & -1 &  0 &  0 &  0 &  0 &  0 &  0 &  0 &  0 \\
38 &  0 &  0 &  0 & -1 &  0 & -1 & -1 &  4 &  0 & -1 &  0 &  0 &  0 &  0 &  0 &  0 &  0 &  0 \\
45 &  0 &  0 &  0 &  0 &  0 &  0 &  0 &  0 &  2 &  0 & -1 &  0 & -1 &  0 &  0 &  0 &  0 &  0 \\
48 &  0 &  0 &  0 &  0 &  0 &  0 & -1 & -1 &  0 &  4 & -1 &  0 &  0 &  0 &  0 &  0 &  0 & -1 \\
49 &  0 &  0 &  0 &  0 &  0 &  0 &  0 &  0 & -1 & -1 &  4 &  0 & -1 &  0 &  0 &  0 &  0 & -1 \\
56 &  0 &  0 &  0 &  0 &  0 &  0 &  0 &  0 &  0 &  0 &  0 &  2 & -1 &  0 & -1 &  0 &  0 &  0 \\
59 &  0 &  0 &  0 &  0 &  0 &  0 &  0 &  0 & -1 &  0 & -1 & -1 &  4 &  0 & -1 &  0 &  0 &  0 \\
67 &  0 & -1 & -1 &  0 &  0 &  0 &  0 &  0 &  0 &  0 &  0 &  0 &  0 &  4 & -1 &  0 & -1 &  0 \\
69 &  0 &  0 &  0 &  0 &  0 &  0 &  0 &  0 &  0 &  0 &  0 & -1 & -1 & -1 &  4 &  0 & -1 &  0 \\
78 &  0 &  0 &  0 &  0 & -1 & -1 &  0 &  0 &  0 &  0 &  0 &  0 &  0 &  0 &  0 &  2 &  0 &  0 \\
79 &  0 &  0 &  0 &  0 &  0 &  0 &  0 &  0 &  0 &  0 &  0 &  0 &  0 & -1 & -1 &  0 &  2 &  0 \\
89 &  0 &  0 &  0 &  0 &  0 &  0 &  0 &  0 &  0 & -1 & -1 &  0 &  0 &  0 &  0 &  0 &  0 &  2 \\
         }.
\]

\begin{figure}[!h]
    \centering
        \begin{tikzpicture}[thick,scale=0.7]
    	\tikzstyle{every node}=[minimum width=0pt, inner sep=2pt, circle]
    	\draw   (0:3) node[draw,blue] (v11) {};
    	\draw  (60:3) node[draw,blue] (v12) {};
    	\draw (120:3) node[draw,blue] (v13) {};
    	\draw (180:3) node[draw,blue] (v14) {};
    	\draw (240:3) node[draw,blue] (v15) {};
        \draw (300:3) node[draw,blue] (v16) {};
        \draw   (0:1.3) node[draw,blue] (v21) {};
    	\draw (120:1.3) node[draw,blue] (v22) {};
    	\draw (240:1.3) node[draw,blue] (v23) {};
        \draw[draw,blue] (v11) -- (v12) -- (v13) -- (v14) -- (v15) -- (v16) -- (v11);
        \draw[draw,blue] (v21) -- (v22) -- (v23) -- (v21);
        \draw[draw,blue] (v21) -- (v11);
        \draw[draw,blue] (v16) -- (v21) -- (v12);
        \draw[draw,blue] (v22) -- (v13);
        \draw[draw,blue] (v12) -- (v22) -- (v14);
        \draw[draw,blue] (v23) -- (v15);
        \draw[draw,blue] (v14) -- (v23) -- (v16);
        \node[draw,fill=white] (v31) at ($(v21)!0.5!(v22)$) {};
    	\node[draw,fill=white] (v32) at ($(v22)!0.5!(v23)$) {};
        \node[draw,fill=white] (v33) at ($(v23)!0.5!(v21)$) {};
        \node[draw,fill=white] (v34) at ($(v21)!0.5!(v11)$) {};
        \node[draw,fill=white] (v35) at ($(v16)!0.5!(v21)$) {};
        \node[draw,fill=white] (v36) at ($(v21)!0.5!(v12)$) {};
        \node[draw,fill=white] (v37) at ($(v22)!0.5!(v13)$) {};
        \node[draw,fill=white] (v38) at ($(v12)!0.5!(v22)$) {};
        \node[draw,fill=white] (v39) at ($(v22)!0.5!(v14)$) {};
        \node[draw,fill=white] (v310) at ($(v23)!0.5!(v15)$) {};
        \node[draw,fill=white] (v311) at ($(v14)!0.5!(v23)$) {};
        \node[draw,fill=white] (v312) at ($(v23)!0.5!(v16)$) {};
        \node[draw,fill=white] (v313) at ($(v11)!0.5!(v12)$) {};
        \node[draw,fill=white] (v314) at ($(v12)!0.5!(v13)$) {};
        \node[draw,fill=white] (v315) at ($(v13)!0.5!(v14)$) {};
        \node[draw,fill=white] (v316) at ($(v14)!0.5!(v15)$) {};
        \node[draw,fill=white] (v317) at ($(v15)!0.5!(v16)$) {};
        \node[draw,fill=white] (v318) at ($(v16)!0.5!(v11)$) {};
        \draw (v31) -- (v36) -- (v34) -- (v35) -- (v33) -- (v312) -- (v310) -- (v311) -- (v32) -- (v39) -- (v37) -- (v38) -- (v31);
        \draw (v313) -- (v34) -- (v318) -- (v35) -- (v312) -- (v317) -- (v310) -- (v316) -- (v311) -- (v39) -- (v315) -- (v37) -- (v314) -- (v38) -- (v36) -- (v313);
    	\end{tikzpicture}
    
    \caption{The first (blue) and second (black) adjacency graphs of the complex of Fig.~\ref{fig:complexandunderlying}.}
    \label{fig:auxiliarygraph}
\end{figure}

There have been explored several Laplacian matrices associated with complexes \cite{MR5088845,MR4925290,MR3077874,MR4131346,MR4783080}.
Here we are interested in this variant which does not depend on the orientation of the simplices.

Let $\Sigma$ be a simplicial complex described by the pair $(G,\mathcal{C})$, and let $k$ be the maximum size of any clique in $\mathcal{C}$.
\begin{definition}
    For $d\in \{1,\dots,k\}$, the $d$-{\it th adjacency graph} $\Gamma_d(\Sigma)$ of $\Sigma$ is the graph whose vertex set is the set of $d$-cliques of $\mathcal{C}$, and two vertices in $\Gamma_d(\Sigma)$ are adjacent if and only if their corresponding $d$-cliques are adjacent in $\Sigma$.
\end{definition}
It is important to know that there have been studied many adjacency graphs of complexes \cite{MR4742211,MR427152,fallat2024minimumnumberdistincteigenvalues}, most of them generalizing the concept of line graph, that is, two $d$-cliques are adjacent if and only if their intersection is not empty.
The first and second adjacency graphs of the complex of Fig.~\ref{fig:complexandunderlying} are shown in Fig.~\ref{fig:auxiliarygraph}.
In particular, the 1-skeleton $G$ of $\Sigma$ coincides with $\Gamma_1(\Sigma)$.
In \ref{app:lapmatauxgra}, the reader can found the {\tt d\_laplacian\_matrix} function that computes the $d$-Laplacian matrix of a simplicial complex and its $d$-{\it th} adjacency graph.
For instance, the following line of code computes the second Laplacian matrix of the simplicial complex defined previously.
\begin{lstlisting}
d_laplacian_matrix(O,2)
\end{lstlisting}

The following results follow from the definitions.

\begin{lemma}\label{lem:LmatofAuxandComplexareequal}
    Let $\Sigma$ be a simplicial complex described by the pair $(G,\mathcal{C})$, and let $k$ be the maximum size of a clique in $\mathcal{C}$.
    For $1\leq d\leq k$, let $\Gamma_d$ be the $d$-adjacency graph of $\Sigma$.
    Then, $L^d(\Sigma)=L(\Gamma_d)$ and $A^d(\Sigma)=A(\Gamma_d)$.
\end{lemma}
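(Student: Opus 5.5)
The plan is to unwind the definitions directly: both sides of each identity are matrices indexed by the same set, so it suffices to check that they agree entry by entry. First I will fix the common indexing. By the definition of the $d$-th adjacency graph, the vertex set of $\Gamma_d$ is exactly the set $\{\alpha_1,\dots,\alpha_{c_d}\}$ of $d$-cliques of $\mathcal{C}$. I will order the vertices of $\Gamma_d$ in the same way as the rows and columns of $A^d(\Sigma)$. With this ordering, $A(\Gamma_d)$ and $A^d(\Sigma)$ are both $c_d\times c_d$ matrices indexed by the same objects.

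Next I will compare the adjacency matrices. For $i\neq j$, the $(i,j)$-entry of $A(\Gamma_d)$ is $1$ precisely when $\alpha_i\alpha_j$ is an edge of $\Gamma_d$. By the definition of $\Gamma_d$, this happens if and only if $\alpha_i$ and $\alpha_j$ are adjacent in $\Sigma$, which is exactly when $A^d(\Sigma)_{ij}=1$. Otherwise both entries are $0$. For the diagonal entries there is one point to check: a $d$-clique is never adjacent to itself in the relevant sense, since adjacency is intended between distinct cliques contained in a common $(d+1)$-clique. Hence $A^d(\Sigma)_{ii}=0$, and $\Gamma_d$ has no loops, so $A(\Gamma_d)_{ii}=0$ as well. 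This gives $A^d(\Sigma)=A(\Gamma_d)$.

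Finally I will compare the Laplacians. The degree of the vertex $\alpha_i$ in $\Gamma_d$ is the number of its neighbours in $\Gamma_d$, which by the previous step is the number of $d$-cliques adjacent to $\alpha_i$, namely $\deg(\alpha_i)$. So the diagonal degree matrices coincide, and therefore
\[
L^d(\Sigma)=\diag(\deg(\alpha_1),\dots,\deg(\alpha_{c_d}))-A^d(\Sigma)=D(\Gamma_d)-A(\Gamma_d)=L(\Gamma_d).
\]

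There is no real obstacle here. The only delicate point is the convention that adjacency and degree count distinct cliques. If two distinct $d$-cliques were allowed to lie in several common $(d+1)$-cliques, the degree would still count each adjacent clique only once, matching the simple graph $\Gamma_d$. In fact two distinct $d$-cliques have at most one common $(d+1)$-clique, namely their union, so this issue does not arise.
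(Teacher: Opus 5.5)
Your proof is correct and matches the paper's approach: the paper gives no separate argument, stating only that the lemma follows from the definitions. Your entry-by-entry comparison, including the convention that adjacency and degree count distinct $d$-cliques (which the paper's worked $2$-Laplacian example confirms), makes that definitional check explicit.
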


It is interesting to note that there are non-isomorphic $k$-trees with the same number of vertices such that their $d$-adjacency graphs are isomorphic.
An example of such phenomenon can be observed on the third adjacency graphs of the two 3-trees shown in Figure \ref{fig:placeholder222}.

\begin{figure}[h!]
    \centering
    \begin{tabular}{c@{\extracolsep{2cm}}c}
    \begin{tikzpicture}[scale=0.85,thick]
		\tikzstyle{every node}=[minimum width=0pt, inner sep=2pt, circle]
			\draw (-1.6,2.6) node[draw] (0) { \tiny 0};
			\draw (-1.2,-0.0) node[draw] (1) { \tiny 1};
			\draw (-2.0,0.6) node[draw] (2) { \tiny 2};
			\draw (0.2,-0.6) node[draw] (3) { \tiny 3};
			\draw (-1.8,1.5) node[draw] (4) { \tiny 4};
			\draw (-3.6,-0.6) node[draw] (5) { \tiny 5};
			\draw (-1.2,0.6) node[draw] (6) { \tiny 6};
			\draw  (1) edge (3);
			\draw  (0) edge (3);
			\draw  (0) edge (4);
			\draw  (4) edge (5);
			\draw  (0) edge (5);
			\draw  (2) edge (6);
			\draw  (4) edge (6);
			\draw  (0) edge (6);
			\draw  (1) edge (6);
			\draw  (3) edge (6);
			\draw  (3) edge (5);
			\draw  (2) edge (4);
			\draw  (2) edge (5);
			\draw  (5) edge (6);
			\draw  (1) edge (5);
		\end{tikzpicture}
         &  
    \begin{tikzpicture}[scale=0.9,thick]
		\tikzstyle{every node}=[minimum width=0pt, inner sep=2pt, circle]
			\draw (90:2) node[draw] (0) { \tiny 0};
			\draw (210:1) node[draw] (1) { \tiny 1};
			\draw (330:2) node[draw] (2) { \tiny 2};
			\draw (0.0,0.0) node[draw] (3) { \tiny 3};
			\draw (210:2) node[draw] (4) { \tiny 4};
			\draw (90:1) node[draw] (5) { \tiny 5};
			\draw (330:1) node[draw] (6) { \tiny 6};
			\draw  (2) edge (4);
			\draw  (0) edge (2);
			\draw  (0) edge (4);
			\draw  (1) edge (5);
			\draw  (1) edge (3);
			\draw  (1) edge (6);
			\draw  (1) edge (4);
			\draw  (4) edge (6);
			\draw  (2) edge (6);
			\draw  (0) edge (6);
			\draw  (0) edge (5);
			\draw  (3) edge (6);
			\draw  (3) edge (5);
			\draw  (5) edge (6);
            \draw  (4) edge (5);
		\end{tikzpicture}    
    \end{tabular}
    \caption{Two non-isomorphic 3-trees whose third adjacency graphs are isomorphic.}
    \label{fig:placeholder222}
\end{figure}

Now let us recall the concept of Smith normal form of a integer matrix.
Two integer matrices $M$ and $N$ are \emph{equivalent} if there exist unimodular matrices $P$ and $Q$ with entries in $\mathbb{Z}$ satisfying $M=PNQ$.
In such case, this relation is denoted by $N\sim M$.
Therefore, $M$ and $N$ are equivalent if and only if $M$ can be transformed into $N$ by means of the following operations:
\begin{enumerate}
  \item swap any two rows or any two columns.
  \item add an integer multiple of one row to another row.
  \item add an integer multiple of one column to another column.
  \item multiply any row or column by $\pm 1$.
\end{enumerate}
The \emph{Smith normal form} of an integer matrix $M$, denoted by $\SNF(M)$, is the unique diagonal matrix $\diag(f_1,\dots,f_r,0,\dots,0)$ equivalent to $M$ such that $r=rank(M)$ and $f_i|f_j$ for $i<j$.
The \emph{invariant factors} (or \emph{elementary divisors}) of $M$ are the integers in the diagonal of the $\SNF(M)$.

The SNF of matrices over principal ideal domains such as $\mathbb{Z}$ and $\mathbb{Q}[x]$ have many applications in algebraic group theory, combinatorics, homology groups, integer programming, lattices, linear Diophantine equations, system theory, and analysis of cryptosystems \cite{cohen,kannan,MR755006,MR957919,schrijver,stanley}.

By considering an $m\times n$ matrix $M$ with integer entries as a linear map $M:\mathbb{Z}^n\rightarrow \mathbb{Z}^m$, the {\it cokernel} of $M$ is the quotient module $\mathbb{Z}^{m}/{\rm Im}\, M$.
It is known that if $N\sim M$, then $coker(M)=\mathbb{Z}^n/{\rm Im} M\cong\mathbb{Z}^n/{\rm Im} N=coker(N)$.
Therefore, as the fundamental theorem of finitely generated Abelian groups states, the cokernel of $M$ can be described as:
$coker(M)\cong \mathbb{Z}_{f_1} \oplus \mathbb{Z}_{f_2} \oplus \cdots \oplus\mathbb{Z}_{f_{r}} \oplus \mathbb{Z}^{n-r}$,
where $r=\rank(M)$, and $f_1, f_2, \dots, f_{r}$ are the {\it invariant factors} of $M$.

Recently, there is much interest in the the finitely generated Abelian groups obtained from the cokernel of a matrix associated with the combinatorial properties of the graph.
Let $G$ be a graph with $n$ vertices.
Let $A(G)$ denote the adjacency matrix of $G$ and let $\deg(G)$ denote the diagonal matrix with the degrees of the vertices of $G$ in the diagonal.
The \emph{Laplacian matrix} $L(G)$ is defined as the matrix $\deg(G)-A(G)$.
The cokernel of the adjacency matrix $A(G)$ is known as the {\it Smith group} of $G$ and is denoted $S(G)$, and the torsion part of the cokernel of the Laplacian matrix $L(G)$ is known as the {\it critical group} or {\it sandpile group} of $G$, denoted by $K(G)$.
For instance, it is not difficult to see that the Laplacian matrix $L(K_n)$ of the complete graph $K_n$ with $n$ vertices is $n{\sf I}_n-{\sf J}_n$ and its Smith normal form is $\diag(1,n,\dots,n,0)$.
From which follows that $K(K_n)=\mathbb{Z}_n^{n-2}$.
Additionally, it is known \cite{klivans} the number of elements in the sandpile group of a graph $G$ coincide with the number $\tau(G)$ of spanning trees of $G$.
Thus, $\tau(K_n)=n^{n-2}$.
And the number of zeros in the diagonal of the Smith normal form of the Laplacian matrix of a graph $G$ counts the number of connected components of $G$.
From which also follows that $\SNF(T)=\diag(1,\dots,1,0)$ for any tree $T$.

Let $\Sigma$ be a simplicial complex described by the pair $(G,\mathcal{C})$, and let $k$ be the maximum size of a clique in $\mathcal{C}$. 
Let $K^d(\Sigma)$ be the torsion part of the cokernel of $L^d(\Sigma)$.
For $1\leq d\leq k$, let $\Gamma_d$ be the $d$-{\it th} adjacency graph of $\Sigma$.
By Lemma~\ref{lem:LmatofAuxandComplexareequal}, $|K^d(\Sigma)|=\tau(\Gamma_d)$.
The following two lemmas are classic in the sandpile group theory.

\begin{lemma}\cite{MR1756151,vince}
    The sandpile group of a connected plane graph is isomorphic to the sandpile group of its dual graph.
\end{lemma}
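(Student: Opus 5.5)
The approach I would take is to realize both sandpile groups as cokernels of matrices that come from one chain complex of the plane embedding, and then compare those cokernels. Let $G$ be a connected plane graph with vertex set $V$, edge set $E$ and face set $F$, and let $G^*$ be its dual. Orient every edge of $G$ and give $E$ the dual orientation in $G^*$. Let $\partial:\mathbb{Z}^E\to\mathbb{Z}^V$ be the signed vertex--edge incidence matrix, and $\delta:\mathbb{Z}^F\to\mathbb{Z}^E$ the boundary map sending a face to the signed sum of the edges on its boundary cycle. Then $L(G)=\partial\partial^T$, and $L(G^*)=\delta^T\delta$ (loops and multiple edges of $G^*$ are handled correctly by this convention). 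Because the plane is simply connected, the sequence
\[
0\to\mathbb{Z}\to\mathbb{Z}^F\xrightarrow{\delta}\mathbb{Z}^E\xrightarrow{\partial}\mathbb{Z}^V\to\mathbb{Z}\to 0
\]
is exact. Here $\mathbb{Z}\to\mathbb{Z}^F$ is the all-ones vector and $\mathbb{Z}^V\to\mathbb{Z}$ is the sum of coordinates. The facts used are that $\operatorname{Im}\delta=\ker\partial$ is the cycle lattice $Z$ and that $\operatorname{coker}\partial\cong\mathbb{Z}$; this is the cellular homology of the sphere.

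The key step is the standard lattice description of the sandpile group. Let $B=\operatorname{Im}\partial^T\subseteq\mathbb{Z}^E$ be the cut lattice, the image of the coboundary map. Then $Z$ and $B$ are orthogonal primitive sublattices, and
\[
K(G)\cong \mathbb{Z}^E/(Z\oplus B)\cong B^\#/B,
\]
where $B^\#$ is the dual lattice of $B$ inside $B\otimes\mathbb{Q}$.

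I would argue this as follows. The map $\partial$ sends $\mathbb{Z}^E$ onto $\ker(\mathbb{Z}^V\to\mathbb{Z})$, and it sends $B=\partial^T(\mathbb{Z}^V)$ onto $\operatorname{Im}(\partial\partial^T)=\operatorname{Im}L(G)$. Since $\ker\partial=Z$, we get
\[
\mathbb{Z}^E/(Z+B)\cong \ker(\mathbb{Z}^V\to\mathbb{Z})/\operatorname{Im}L(G),
\]
which is the torsion part of $\operatorname{coker}L(G)$, that is, $K(G)$. The sum $Z+B$ is direct because the two lattices are orthogonal.

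The identification with $B^\#/B$ uses unimodularity of $\mathbb{Z}^E$ and primitivity of $Z$. Orthogonal projection onto $B\otimes\mathbb{Q}$ maps $\mathbb{Z}^E$ onto $B^\#$ with kernel $Z$. Hence $\mathbb{Z}^E/(Z\oplus B)\cong B^\#/B$.

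Running the same argument with $\delta^T$ in the role of $\partial$ needs the dual exact sequence. Transposing the sequence above gives $\ker\delta^T=B$ and $\operatorname{coker}\delta^T\cong\mathbb{Z}$, with the torsion-freeness coming from exactness at every spot. The same computation then yields
\[
K(G^*)\cong \mathbb{Z}^E/(B\oplus Z)\cong Z^\#/Z.
\]
This is the same group $\mathbb{Z}^E/(Z\oplus B)$, so $K(G)\cong K(G^*)$.

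The main obstacle is justifying exactness, in particular that every cycle of $G$ is an integer combination of face boundaries and that $\ker\delta$ is spanned by the all-ones vector. I would cite the homology of the sphere, or prove it by induction on the number of edges. Deleting an edge that lies on two distinct faces merges those faces, and contracting a bridge is handled dually; both operations preserve exactness and commute with duality, since deletion in $G$ corresponds to contraction in $G^*$.

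Care is needed with loops and bridges, since a bridge of $G$ is a loop of $G^*$. Such edges contribute zero to the relevant Laplacian but still appear in the lattices, and the lattice argument treats them uniformly, so no separate case is needed.
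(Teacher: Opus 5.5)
Your argument is correct. It is essentially the standard cycle/cut-lattice proof from the sources the paper cites: the paper gives no proof of its own, only the references to Cori--Rossin and Vince. In that proof both $K(G)$ and $K(G^*)$ are identified with the single quotient $\mathbb{Z}^E/(Z\oplus B)$, and planar duality just exchanges the roles of the cycle lattice $Z$ and the cut lattice $B$.

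The only slip is in a step you do not actually need. Surjectivity of the projection $\mathbb{Z}^E\to B^\#$ uses primitivity of $B$, which you get from $B=\ker\delta^T$; primitivity of $Z$ only gives that the kernel is $Z$. You can simply drop the $B^\#/B$ and $Z^\#/Z$ descriptions.
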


Therefore, the sandpile group of a connected planar graph is isomorphic to the sandpile group of any of its dual graphs.
Another interesting result is the following.

\begin{lemma}\cite{klivans}\label{lema:bloques}
Let $G$ be a graph with $b$ non-trivial blocks $B_1,\dots,B_b$.
Then $K(G)\simeq K(B_1)\oplus\cdots\oplus K(B_b)$.
\end{lemma}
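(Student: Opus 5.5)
The plan is to reduce the statement to a single cut vertex and then induct on the number of blocks $b$. If $b\le 1$ there is nothing to prove: either $G$ has no non-trivial block, and then it is a single vertex with trivial sandpile group, or $G$ is itself 2-connected or $K_2$. For $b\ge 2$ I will pick a cut vertex $v$ and write $G=G_1\cup G_2$ with $V(G_1)\cap V(G_2)=\{v\}$, where $G_1$ and $G_2$ are connected and each is a union of fewer blocks. The blocks of $G$ are then exactly the blocks of $G_1$ together with the blocks of $G_2$. So it suffices to prove the two-piece statement
\[
K(G)\cong K(G_1)\oplus K(G_2),
\]
and then apply induction to $G_1$ and $G_2$. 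The same computation shows that trivial blocks (bridges, which are $K_2$) contribute $K(K_2)=0$, so they can be ignored.

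For the two-piece statement I will work with the reduced Laplacian. Order the vertices as $V(G_1)\setminus\{v\}$, then $v$, then $V(G_2)\setminus\{v\}$. Since no edge joins $G_1-v$ to $G_2-v$, the matrix $L(G)$ has the block form
\[
L(G)=\begin{pmatrix} M_1 & a_1 & 0\\ a_1^{T} & d_1+d_2 & a_2^{T}\\ 0 & a_2 & M_2\end{pmatrix},
\]
where $L(G_i)$ has $M_i$ as the submatrix obtained by deleting the row and column of $v$, the column $a_i$ records the edges from $v$ into $G_i-v$, and $d_i$ is the degree of $v$ in $G_i$.

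Since $G$ is connected, I will use the standard fact that $K(G)\cong\operatorname{coker}$ of the reduced Laplacian obtained by deleting the row and column of any one vertex. This follows because the rows of $L(G)$ sum to zero, and so do the columns. Explicitly, adding all other rows to the row of $v$ turns it into zero, and likewise for columns, so
\[
L(G)\sim \diag\bigl(\widetilde L_v(G),0\bigr),
\]
where $\widetilde L_v(G)$ is $L(G)$ with the row and column of $v$ deleted. Moreover $\widetilde L_v(G)$ has full rank, by the connectivity and zero-counting remarks recalled just before the lemma. Hence $\operatorname{coker} L(G)\cong\operatorname{coker}\widetilde L_v(G)\oplus\mathbb{Z}$, and the torsion part is exactly $\operatorname{coker}\widetilde L_v(G)$.

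Deleting $v$, the cut vertex itself, gives
\[
\widetilde L_v(G)=\diag(M_1,M_2)=\diag\bigl(\widetilde L_v(G_1),\widetilde L_v(G_2)\bigr),
\]
whose cokernel is $K(G_1)\oplus K(G_2)$ by the same fact applied to $G_1$ and $G_2$. This proves the two-piece statement.

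The only step needing care is the justification that the reduced Laplacian with respect to an arbitrary vertex computes the torsion of $\operatorname{coker} L$. I will handle it by the explicit row and column operations described above, which are unimodular and so preserve the cokernel.

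The rest is bookkeeping: checking that the block decomposition of $G$ is the union of the block decompositions of $G_1$ and $G_2$, which is standard since blocks are the maximal 2-connected subgraphs or bridges. For the induction to apply, I will choose $G_1$ to be a leaf block of the block-cutvertex tree and $G_2$ to be the rest of the graph.
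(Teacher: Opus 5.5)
The paper gives no proof of this lemma; it quotes it from Klivans's book as a classical fact. Your argument is correct and is the standard proof, so it supplies what the paper leaves to the reference.

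The key steps all hold:
\begin{itemize}
\item The unimodular operations that add all other rows and columns into those of $v$ give $L(G)\sim\diag(\widetilde L_v(G),0)$.
\item For connected $G$, $\widetilde L_v(G)$ is nonsingular, so the torsion of $\operatorname{coker}L(G)$ is exactly $\operatorname{coker}\widetilde L_v(G)$.
\item Taking $v$ to be a cut vertex gives $\widetilde L_v(G)=\diag(\widetilde L_v(G_1),\widetilde L_v(G_2))$, because no edge joins $G_1-v$ to $G_2-v$.
\item Peeling off a leaf block of the block--cutvertex tree then completes the induction.
\end{itemize}

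Compared with the bare citation, your route is self-contained and shows the splitting at the level of matrices. It is the general-graph analogue of what the paper later does explicitly for block graphs in Theorem~\ref{teo:main-block-L}. There, $PL(G)P^t$ is shown to be a direct sum of the reduced Laplacians $sI_{s-1}-J_{s-1}$ of the cliques, plus a $[0]$.

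Two small points need tidying.
\begin{enumerate}
\item The lemma as stated does not assume $G$ is connected, but your argument does. The disconnected case follows at once: $L(G)$ is block diagonal over the components, and the torsion of a finite direct sum is the direct sum of the torsions.
\item Your base case, ``no non-trivial block means $G$ is a single vertex'', is right only if ``non-trivial'' means ``containing an edge''. If it means ``not a bridge'', which is the reading that makes the qualifier meaningful, then a tree has no non-trivial blocks. So state the induction over all blocks, bridges included, and discard the summands $K(K_2)=0$ at the end. Your argument effectively does this already.
\end{enumerate}
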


In particular, previous lemma can be stated in terms of the adjacency graphs of a simplicial complex, which will be used below.

\begin{lemma}
    Let $\Sigma$ be a simplicial complex described by the pair $(G,\mathcal{C})$, and let $k$ be the maximum size of a clique in $\mathcal{C}$.
    For $1\leq d\leq k$, let $\Gamma_d$ be the $d$-adjacency graph of $\Sigma$.
    If $B_1,\dots,B_b$ are the non-trivial blocks of $\Gamma_d$, then the torsion part of cokernel of $L^d(\Sigma)$ coincides with $K(\Gamma_d)$ and $K(B_1)\oplus\cdots\oplus K(B_b)$.
\end{lemma}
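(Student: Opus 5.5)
This is essentially a direct combination of Lemma~\ref{lem:LmatofAuxandComplexareequal} with Lemma~\ref{lema:bloques}, so the plan is to chain them through the definition of the cokernel.

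First, we invoke Lemma~\ref{lem:LmatofAuxandComplexareequal}, which gives the literal matrix equality $L^d(\Sigma)=L(\Gamma_d)$, once the rows and columns of both matrices are indexed by the same ordering of the $d$-cliques of $\mathcal{C}$ (the vertices of $\Gamma_d$). If one prefers different orderings, the two matrices differ by a simultaneous permutation of rows and columns. That is a product with unimodular permutation matrices, so $L^d(\Sigma)\sim L(\Gamma_d)$. Equivalent integer matrices have isomorphic cokernels, as recalled before the definition of the sandpile group, so $coker(L^d(\Sigma))\cong coker(L(\Gamma_d))$. Taking torsion subgroups, which is preserved under isomorphism, we get $K^d(\Sigma)\cong K(\Gamma_d)$ by the definition of the critical group. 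This settles the first identification.

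For the second identification, we apply Lemma~\ref{lema:bloques} to the graph $\Gamma_d$ with its non-trivial blocks $B_1,\dots,B_b$. This yields $K(\Gamma_d)\cong K(B_1)\oplus\cdots\oplus K(B_b)$, and composing with the previous isomorphism finishes the proof.

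The only point needing care is connectivity. Lemma~\ref{lema:bloques} is usually stated for connected graphs, while $\Gamma_d$ need not be connected. I would handle this first by noting that, after permuting vertices, $L(\Gamma_d)$ is block diagonal with one block per connected component. Hence $K(\Gamma_d)$ is the direct sum of the sandpile groups of the components, each component contributing exactly one zero invariant factor to the free part. The blocks of $\Gamma_d$ are the union of the blocks of its components, so applying Lemma~\ref{lema:bloques} componentwise gives the claim in general. Beyond this, I expect no real obstacle.
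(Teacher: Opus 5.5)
Your proposal is correct and matches the paper's argument. The paper gives no separate proof: it presents this lemma as Lemma~\ref{lema:bloques} restated through the identification $L^d(\Sigma)=L(\Gamma_d)$ of Lemma~\ref{lem:LmatofAuxandComplexareequal}, which is exactly how you chain them. Your extra componentwise reduction for a disconnected $\Gamma_d$ is also sound, since the torsion of the cokernel of a block-diagonal matrix is the direct sum of the torsions of the blocks, and it covers a point the paper leaves unaddressed.
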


These relations between the SNFs of the Laplacian matrices of the simplicial complexes and the sandpile groups of the associated adjacency graphs will be used in the following sections.

The manustript is organized as follows.
In Section~\ref{sec:annuli}, we compute the SNF of the second Laplacian matrix of the simplicial annulus.
In Section~\ref{sec:ktrees}, we compute the SNF of the $k$-{\it th} Laplacian matrix of the $k$-trees.
We also explore numerically how good are the SNFs and the spectrum of all Laplacian matrices to distinguish $k$-trees.
Finally, in Section~\ref{sec:GLP}, we show applications of the Graham-Lovász-Pollak matrix to reduce the Laplacian matrix to a equivalent and, some times, simpler matrix.

\section{Simplicial annuli}\label{sec:annuli}

\begin{figure}[h!]
    \centering
    \begin{tikzpicture}[thick,scale=0.7]
    	\tikzstyle{every node}=[minimum width=0pt,circle]
        \coordinate (A1) at  (54:3);
        \coordinate (A2) at  (90:3);
        \coordinate (A3) at (126:3);
        \coordinate (A4) at  (54:5);
        \coordinate (A5) at  (90:5);
        \coordinate (A6) at (126:5);
        \draw[fill=cof] (A1) -- (A2) -- (A3) -- (A6) -- (A5) -- (A4) -- (A1);
        \draw (A4) -- (A2) -- (A6);
        \draw (A2) -- (A5);
    	\draw (A1) node[inner sep=2pt,draw,fill=white,label=270:{\footnotesize $v_{i-1}$}] (v11) {};
    	\draw (A2) node[inner sep=2pt,draw,fill=white,label=270:{\footnotesize $v_i$}] (v12) {};
    	\draw (A3) node[inner sep=2pt,draw,fill=white,label=270:{\footnotesize $v_{i+1}$}] (v13) {};
    	\draw (A4) node[inner sep=2pt,draw,fill=white,label=90:{\footnotesize $u_{2i-1}$}] (v21) {};
    	\draw (A5) node[inner sep=2pt,draw,fill=white,label=90:{\footnotesize $u_{2i}$}] (v22) {};
        \draw (A6) node[inner sep=2pt,draw,fill=white,label=90:{\footnotesize $u_{2i+1}$}] (v23) {};
        \draw (45:3) node[inner sep=0,draw,fill] {};
        \draw (47:3) node[inner sep=0,draw,fill] {};
        \draw (49:3) node[inner sep=0,draw,fill] {};
        \draw (45:4) node[inner sep=0,draw,fill] {};
        \draw (47:4) node[inner sep=0,draw,fill] {};
        \draw (49:4) node[inner sep=0,draw,fill] {};
        \draw (45:5) node[inner sep=0,draw,fill] {};
        \draw (47:5) node[inner sep=0,draw,fill] {};
        \draw (49:5) node[inner sep=0,draw,fill] {};
        \draw (131:3) node[inner sep=0,draw,fill] {};
        \draw (133:3) node[inner sep=0,draw,fill] {};
        \draw (135:3) node[inner sep=0,draw,fill] {};
        \draw (131:4) node[inner sep=0,draw,fill] {};
        \draw (133:4) node[inner sep=0,draw,fill] {};
        \draw (135:4) node[inner sep=0,draw,fill] {};
        \draw (131:5) node[inner sep=0,draw,fill] {};
        \draw (133:5) node[inner sep=0,draw,fill] {};
        \draw (135:5) node[inner sep=0,draw,fill] {};
    \end{tikzpicture}
    \caption{Local adjacencies in the annulus.}
    \label{fig:annulus}
\end{figure}

Recall that a {\it simplicial annulus} is a triangulation of the standard topological annulus $S^1\times[0,1]$ using triangles that creates a ring-like structure.
We are going to be particularly interested in the following family of simplicial annuli.
For $n\geq 3$, let $O_n$ consist of two disjoint cycles $C_n$ and $C_{2n}$ whose vertex-sets are $\{v_0,\dots,v_{n-1}\}$ and $\{u_0,\dots,u_{2n-1}\}$, respectively.
For $i\in\{0,\dots,n-1\}$, vertex $v_i$ is adjacent with vertices $u_{2i-1}$, $u_{2i}$ and $u_{2i+1}$, where the sub-indices of the vertices in $C_{2n}$ are taken modulo $2n$.
And the allowed 3-cliques are those of the form $\{v_i,u_{2i-1},u_{2i}\}$, $\{v_i,u_{2i},u_{2i+1}\}$ and $\{v_i,v_{i+1},u_{2i+1}\}$, see Figure~\ref{fig:annulus}.
In \ref{sec:annulussage}, the reader can found a {\tt sagemath} function that returns the simplicial annulus $O_n$.
In this section, we will compute the SNF of the second Laplacian matrix of the simplicial annulus $O_n$.


Let $\widehat{C_n}$ be the graph constructed from a cycle $C_n$ with vertex set $\{v_0,\dots,v_{n-1}\}$ and an empty graph $\overline{K_n}$ with vertex set $\{u_0,\dots,u_{n-1}\}$ such that $u_i$ is adjacent with $v_i$ and $v_{i+1}$, see Figure~\ref{fig:geargraph}.




\begin{figure}[h!]
    \centering
    \begin{tabular}{c@{\extracolsep{1cm}}ccc}
        \begin{tikzpicture}[thick,scale=0.7]
        	\tikzstyle{every node}=[minimum width=0pt, inner sep=2pt, circle]
            \draw   (0:1) node[draw] (v11) {};
        	\draw (120:1) node[draw] (v12) {};
        	\draw (240:1) node[draw] (v13) {};
            \draw  (60:1.5) node[draw] (v21) {};
        	\draw (180:1.5) node[draw] (v22) {};
        	\draw (300:1.5) node[draw] (v23) {};
            \draw (v13) -- (v11) -- (v12) -- (v13);
            \draw (v13) -- (v23) -- (v11) -- (v21) -- (v12) -- (v22) -- (v13);
        \end{tikzpicture}
        &
        \begin{tikzpicture}[thick,scale=0.7]
        	\tikzstyle{every node}=[minimum width=0pt, inner sep=2pt, circle]
            \draw   (0:1) node[draw] (v11) {};
        	\draw  (90:1) node[draw] (v12) {};
        	\draw (180:1) node[draw] (v13) {};
            \draw (270:1) node[draw] (v14) {};
            \draw  (45:1.5) node[draw] (v21) {};
        	\draw (135:1.5) node[draw] (v22) {};
        	\draw (225:1.5) node[draw] (v23) {};
            \draw (315:1.5) node[draw] (v24) {};
            \draw (v14) -- (v11) -- (v12) -- (v13) -- (v14);
            \draw (v14) -- (v24) -- (v11) -- (v21) -- (v12) -- (v22) -- (v13) -- (v23) -- (v14);
        \end{tikzpicture}
        &
        \begin{tikzpicture}[thick,scale=0.7]
        	\tikzstyle{every node}=[minimum width=0pt, inner sep=2pt, circle]
            \draw   (0:1) node[draw] (v11) {};
        	\draw  (72:1) node[draw] (v12) {};
        	\draw (144:1) node[draw] (v13) {};
            \draw (216:1) node[draw] (v14) {};
            \draw (288:1) node[draw] (v15) {};
            \draw  (36:1.5) node[draw] (v21) {};
        	\draw (108:1.5) node[draw] (v22) {};
        	\draw (180:1.5) node[draw] (v23) {};
            \draw (252:1.5) node[draw] (v24) {};
            \draw (324:1.5) node[draw] (v25) {};
            \draw (v15) -- (v11) -- (v12) -- (v13) -- (v14) -- (v15);
            \draw (v15) -- (v25) -- (v11) -- (v21) -- (v12) -- (v22) -- (v13) -- (v23) -- (v14) -- (v24) -- (v15);
        \end{tikzpicture}
        &
        \begin{tikzpicture}[thick,scale=0.7]
        	\tikzstyle{every node}=[minimum width=0pt, inner sep=2pt, circle]
            \draw   (0:1) node[draw] (v11) {};
        	\draw  (60:1) node[draw] (v12) {};
        	\draw (120:1) node[draw] (v13) {};
            \draw (180:1) node[draw] (v14) {};
            \draw (240:1) node[draw] (v15) {};
            \draw (300:1) node[draw] (v16) {};
            \draw  (30:1.5) node[draw] (v21) {};
        	\draw  (90:1.5) node[draw] (v22) {};
        	\draw (150:1.5) node[draw] (v23) {};
            \draw (210:1.5) node[draw] (v24) {};
            \draw (270:1.5) node[draw] (v25) {};
            \draw (330:1.5) node[draw] (v26) {};
            \draw (v16) -- (v11) -- (v12) -- (v13) -- (v14) -- (v15) -- (v16);
            \draw (v16) -- (v26) -- (v11) -- (v21) -- (v12) -- (v22) -- (v13) -- (v23) -- (v14) -- (v24) -- (v15) -- (v25) -- (v16);
        \end{tikzpicture}
    \end{tabular}
    \caption{The graphs $\widehat{C_3}$, $\widehat{C_4}$, $\widehat{C_5}$ and $\widehat{C_6}$.}
    \label{fig:geargraph}
\end{figure}

\begin{lemma}\label{lem:secondauxiliarygraphOnisomorphicC3n}
    The second adjacency graph $\Gamma_2(O_n)$ is isomorphic to $\widehat{C_{3n}}$.
\end{lemma}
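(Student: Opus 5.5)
We construct an explicit bijection between the edges of $O_n$ (the $2$-cliques, i.e.\ the vertices of $\Gamma_2(O_n)$) and the vertices of $\widehat{C_{3n}}$, and check that it preserves adjacency. The count fits. $O_n$ has $n$ edges $v_iv_{i+1}$ on the inner cycle, $2n$ edges $u_ju_{j+1}$ on the outer cycle, and $3n$ spokes $v_iu_{2i-1},v_iu_{2i},v_iu_{2i+1}$. That gives $6n$ edges, matching the $3n+3n$ vertices of $\widehat{C_{3n}}$. Two edges are adjacent in $\Gamma_2$ exactly when they lie in a common allowed triangle. Each allowed triangle contributes a $K_3$ to $\Gamma_2$, and there are $3n$ triangles.

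First I classify edges by the number of allowed triangles containing them. An outer edge $u_{2i-1}u_{2i}$ lies only in $\{v_i,u_{2i-1},u_{2i}\}$, and $u_{2i}u_{2i+1}$ lies only in $\{v_i,u_{2i},u_{2i+1}\}$. An inner edge $v_iv_{i+1}$ lies only in $\{v_i,v_{i+1},u_{2i+1}\}$. So these $3n$ boundary edges have degree $2$. Every spoke is interior and lies in exactly two triangles:
\begin{itemize}
\item $v_iu_{2i}$ lies in the two triangles at $v_i$ sharing $u_{2i}$;
\item $v_iu_{2i+1}$ lies in $\{v_i,u_{2i},u_{2i+1}\}$ and $\{v_i,v_{i+1},u_{2i+1}\}$;
\item $v_iu_{2i-1}$ lies in $\{v_i,u_{2i-1},u_{2i}\}$ and $\{v_{i-1},v_i,u_{2i-1}\}$.
\end{itemize}
Spokes therefore have degree $4$. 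Each triangle has exactly one boundary edge and two spokes.

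Next I order the spokes cyclically by walking around the annulus:
\[
s_{3i}=v_iu_{2i-1},\quad s_{3i+1}=v_iu_{2i},\quad s_{3i+2}=v_iu_{2i+1},\qquad i=0,\dots,n-1,
\]
with indices taken mod $3n$. In triangle order
\[
\{v_i,u_{2i-1},u_{2i}\},\ \{v_i,u_{2i},u_{2i+1}\},\ \{v_i,v_{i+1},u_{2i+1}\},
\]
the spoke pairs are $(s_{3i},s_{3i+1})$, $(s_{3i+1},s_{3i+2})$ and $(s_{3i+2},s_{3i+3})$. The last pair uses $v_{i+1}u_{2i+1}=v_{i+1}u_{2(i+1)-1}=s_{3i+3}$. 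So the triangles are $T_j\ni s_j,s_{j+1}$ for $j=0,\dots,3n-1$. Let $b_j$ be the unique boundary edge of $T_j$.

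The map sends $s_j\mapsto v_j$ and $b_j\mapsto u_j$ in $\widehat{C_{3n}}$. Under it, the edges of $\Gamma_2$ coming from $T_j$ are $s_js_{j+1}$, $b_js_j$ and $b_js_{j+1}$. These correspond exactly to the cycle edge $v_jv_{j+1}$ and the two edges $u_jv_j$, $u_jv_{j+1}$. Each edge of $\Gamma_2$ comes from some triangle, and no two edges share two triangles, since two allowed triangles meet in at most one edge. Hence the edge sets coincide and the map is an isomorphism.

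The only delicate step is the cyclic indexing: checking that consecutive triangles around the annulus share exactly the spoke $s_{j+1}$, including the wrap from the $v_i$ fan to the $v_{i+1}$ fan, with indices mod $2n$ and $3n$ handled consistently. I will verify this by the explicit identity $v_{i+1}u_{2i+1}=s_{3(i+1)}$ above. For $n\ge 3$ there are no coincidences among the listed edges, so all $6n$ are distinct.
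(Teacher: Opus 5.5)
Your proof is correct and follows essentially the paper's route: an explicit bijection from the $6n$ edges of $O_n$ onto $V(\widehat{C_{3n}})$, followed by an adjacency check. Your bijection agrees with the paper's map $\varphi$ up to rotating $\widehat{C_{3n}}$ by one index. The only difference is bookkeeping. You chain the allowed triangles so that $T_j$ has edges $s_j$, $s_{j+1}$, $b_j$ and compare edge sets triangle by triangle, which settles both directions of adjacency at once. The paper instead checks neighbourhoods case by case for the three edge classes and asserts the converse with ``Similarly''.
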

\begin{proof}
    Let us assume that the vertex set of $O_n$ is $\{v_0,\dots,v_{n-1},v'_0,\dots,v'_{2n-1}\}$ such that $\{v_0,\dots,v_{n-1}\}$ induce the cycle $C_n$ and $\{v'_0,\dots,v'_{2n-1}\}$ induce the cycle $C_{2n}$.
    The vertex set of $\Gamma_2(O_n)$ can be partitioned into three subsets: $V_1 = \{v_iv_{i+1}: 0 \leq i \leq n-1\}$, $V_2 = \{v_i'v_{i+1}': 0 \leq i \leq 2n-1\}$ and $V_3 = \{v_iv_{2i+j}': 0 \leq i \leq n-1,\; j=-1,0,1\}.$
    
    On other hand, let ut assume that the vertex set of $\widehat{C_{3n}}$ is $\{u_0,\dots,u_{3n-1},u'_0,\dots,u'_{3n-1}\}$ such that the vertices $\{u_0,\dots,u_{3n-1}\}$ induce the $C_{3n}$ and the vertices $\{u'_0,\dots,u'_{3n-1}\}$ induce the $\overline{K_{3n}}$.

    We define $\varphi \colon V(\Gamma_2(O_n)) \to V(\widehat{C_{3n}})$ as follows:
    \begin{itemize}
        \item[1] If $v_iv_{i+1} \in V_1$, then $\varphi(v_iv_{i+1}) = u_{3i+1}'$.
        \item[2] If $v_i'v_{i+1}' \in V_2$, then 
        \[
        \varphi(v_i'v_{i+1}') =  \begin{cases}
            u_{\frac{3i}{2}}' & \text{if $i$ is even},\\
            u_{\frac{3i+1}{2}} & \text{otherwise}.
        \end{cases}
        \]
        \item[3] If $v_iv_{2i+j}' \in V_3$, then 
        \[
        \varphi(v_iv_{2i+j}') =  \begin{cases}
            u_{3n-1} & \text{if $i = 0$ and $j = -1$},\\
            u_{3i + j} & \text{otherwise}.
        \end{cases}
        \]
    \end{itemize}

It is easy to see that $\varphi$ is injective on each $V_i$, and hence $\varphi$ is injective. Moreover, since $|V(\Gamma_2(O_n))| = |V(\widehat{C_{3n}})|$, it follows that $\varphi$ is a bijection.

We now show that $\varphi$ is a graph homomorphism.

\textbf{Case 1.} Let $e_1 = v_iv_{i+1} \in V_1$. Note that $e_1$ has degree $2$. Let $e_2$ and $e_3$ be the neighbors of $e_1$. If $e_1 = v_{2n-1}v_{0}$, then $e_2 = v_0v_{2n-1}'$ and $e_3 = v_{n-1}v_{2n-1}'$. Then $\varphi(e_1)\varphi(e_2) = u_{3n-2}'u_{3n-1} \in E(\widehat{C_{3n}})$ and $\varphi(e_1)\varphi(e_3) = u_{3n-2}'u_{3n-2} \in E(\widehat{C_{3n}})$.
If $e_1 \neq v_{n-1}v_0$, then $e_2 = v_iv_{2i+1}'$ and $e_3 = v_{i+1}v_{2i+1}'$. Hence, $\varphi(e_1)\varphi(e_2) = u_{3i+1}'u_{3i+1} \in E(\widehat{C_{3n}})$ and $\varphi(e_1)\varphi(e_3) = u_{3i+1}'u_{3i+2} \in E(\widehat{C_{3n}}).$

\textbf{Case 2.} Let $e_1 = v_i'v_{i+1}' \in V_2$. Note that $e_1$ has degree $2$. Let $e_2$ and $e_3$ be its neighbors. If $e_1 = v_{2n-1}'v_{0}'$, then $e_2 = v_0v_0'$ and $e_3 = v_0v_{2n-1}'$. Then 
$\varphi(e_1)\varphi(e_2) = u_{3n-1}'u_0 \in E(\widehat{C_{3n}})$ and $\varphi(e_1)\varphi(e_3) = u_{3n-1}'u_{3n-1} \in E(\widehat{C_{3n}})$.
If $e_1 \neq v_{n-1}v_0$ and $i$ is even, then $e_2 = v_{\frac{i}{2}}v_i'$ and $e_3 = v_{\frac{i}{2}}v_{i+1}'$. Hence, $\varphi(e_1)\varphi(e_2) = u_{\frac{3i}{2}}'u_{\frac{3i}{2}} \in E(\widehat{C_{3n}})$ and $\varphi(e_1)\varphi(e_3) = u_{\frac{3i}{2}}'u_{\frac{3i}{2}+1} \in E(\widehat{C_{3n}})$. If $e_1 \neq v_{n-1}v_0$ and $i$ is odd, then $e_2 = v_{\frac{i+1}{2}}v_{2i}'$ and $e_3 = v_{\frac{i+1}{2}}v_{i+1}'$. Thus, $\varphi(e_1)\varphi(e_2) = u_{\frac{3i+1}{2}}'u_{\frac{3i+1}{2}}$ and $\varphi(e_1)\varphi(e_3) = u_{\frac{3i+1}{2}}'u_{\frac{3i+1}{2}+1} \in E(\widehat{C_{3n}}).$

\textbf{Case 3.} Let $e_1 = v_iv_{2i+j}' \in V_3$. Note that $e_1$ has degree $4$. Two of its neighbors, say $e_2$ and $e_3$, lie in $V_3$. If $e_1 = v_{0}v_{2n-1}'$, then $e_2 = v_{n-1}v_{2n-1}'$ and $e_3 = v_0v_0'$. Then $\varphi(e_1)\varphi(e_2) = u_{3n-1}u_{3n-2} \in E(\widehat{C_{3n}})$ and $\varphi(e_1)\varphi(e_3) = u_{3n-1}u_{0} \in E(\widehat{C_{3n}})$. If $e_1 \neq v_{0}v_{2n-1}'$ and $j = 0$, then $e_2 = v_iv_{2i-1}'$ and $e_3 = v_iv_{2i+1}'$. Hence, $\varphi(e_1)\varphi(e_2) = u_{3i}u_{3i-1} \in E(\widehat{C_{3n}})$ and
$\varphi(e_1)\varphi(e_3) = u_{3i}u_{3i+1} \in E(\widehat{C_{3n}}).$

Suppose now that $e_1 \neq v_{0}v_{2n-1}'$ and $j = 1$. Then $e_2 = v_{i}v_{2i}'$ and $e_3 = v_{i+1}v_{2i+1}'$. If $e_2 = v_0v_{2n-1}$, then $e_1 = v_0v_1$, a contradiction. If $e_3 = v_0v_{2n-1}$, then $e_1 = v_{n-1}v_{2n-1}$ and $e_2 = v_{n-1}v_{2n-2}'$. Thus,$\varphi(e_1)\varphi(e_2) = u_{3n-2}u_{3n-3}$ and $\varphi(e_1)\varphi(e_3) = u_{3n-2}u_{3n-1} \in E(\widehat{C_{3n}})$. Finally, if $e_2, e_3 \neq v_0v_{2n-1}$, then $\varphi(e_1)\varphi(e_2) = u_{3i+1}u_{3i} \in E(\widehat{C_{3n}})$ and $\varphi(e_1)\varphi(e_3) = u_{3i+1}u_{3i+1}$.

Therefore, $\varphi$ is a graph homomorphism. Similarly, if $\varphi(e)\varphi(e') \in E(\widehat{C_{3n}})$, then $ee' \in E(\Gamma_2(O_n))$.
    
\end{proof}

Let $S_n$ denote the {\it star graph} with vertex set $\{v,v_1,\dots,v_{n}\}$ such that $v$ is adjacent with the leaves $v_1,\dots,v_n$.
Let $\widehat{S_n}$ be the graph constructed from the star $S_n$ by adding a new vertex $u$ such that $u$ is adjacent by mean of two edges with the leaves of $S_n$, see Figure~\ref{fig:conestargraph}.

\begin{figure}[h!]
    \centering
    \begin{tikzpicture}[thick,scale=0.7]
    	\tikzstyle{every node}=[minimum width=0pt, inner sep=2pt, circle]
        \draw (-2,0) node[draw,label=180:{\footnotesize $v$}] (v) {};
        \draw (0,2) node[draw] (v1) {};
        \draw (0,1) node[draw] (v2) {};
        \draw (0,0) node[draw] (v3) {};
        \draw (0,-1) node[draw] (v4) {};
        \draw (0,-2) node[draw] (v5) {};
        \draw (4,0) node[draw,label=0:{\footnotesize $u$}] (u) {};
        \draw (v) -- (v1);
        \draw (v) -- (v2);
        \draw (v) -- (v3);
        \draw (v) -- (v4);
        \draw (v) -- (v5);
        \draw (u) edge[bend left] (v1);
        \draw (u) edge[bend left] (v2);
        \draw (u) edge[bend left] (v3);
        \draw (u) edge[bend left] (v4);
        \draw (u) edge[bend left] (v5);
        \draw (u) edge[bend right] (v1);
        \draw (u) edge[bend right] (v2);
        \draw (u) edge[bend right] (v3);
        \draw (u) edge[bend right] (v4);
        \draw (u) edge[bend right] (v5);
    \end{tikzpicture}
    \caption{The $\widehat{S_5}$ graph.}
    \label{fig:conestargraph}
\end{figure}

\begin{lemma}\label{lem:dualCnisSn}
    For $n\geq3$, the dual graph of $\widehat{C_n}$ is isomorphic to $\widehat{S_n}$.
\end{lemma}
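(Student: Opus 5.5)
We fix the natural plane embedding of $\widehat{C_n}$ and read off its faces. Draw the cycle $C_n$ with vertices $v_0,\dots,v_{n-1}$ on a circle of radius $1$. Place each $u_i$ outside the circle, near the arc joining $v_i$ and $v_{i+1}$, and draw the edges $u_iv_i$ and $u_iv_{i+1}$ as straight segments, exactly as in Figure~\ref{fig:geargraph}. This is a plane embedding. Since $\widehat{C_n}$ has $2n$ vertices and $3n$ edges ($n$ cycle edges plus $2n$ edges at the $u_i$) and is connected, Euler's formula gives $3n-2n+2=n+2$ faces. We identify them explicitly.
\begin{itemize}
\item The inner face $F$ is bounded by the cycle $v_0v_1\cdots v_{n-1}$.
\item For each $i$, there is a triangular face $T_i$ bounded by $v_i$, $u_i$, $v_{i+1}$. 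It is enclosed between the edge $v_iv_{i+1}$ and the path $v_iu_iv_{i+1}$.
\item The outer face $O$ is bounded by the closed walk $v_0u_0v_1u_1\cdots v_{n-1}u_{n-1}v_0$.
\end{itemize}
This gives $1+n+1=n+2$ faces, matching Euler's formula, so the list is complete.

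Next we compute the dual graph, with one dual vertex per face and one dual edge per edge of $\widehat{C_n}$ joining the two faces on its sides. Each cycle edge $v_iv_{i+1}$ separates $F$ from $T_i$, giving exactly one dual edge $F T_i$ for every $i$. Each edge $u_iv_i$ or $u_iv_{i+1}$ separates $T_i$ from $O$, giving exactly two dual edges between $T_i$ and $O$. No edge has the same face on both sides, because $\widehat{C_n}$ has no bridges (every edge lies on a cycle), so the dual has no loops. The dual therefore has vertex set $\{F,O,T_0,\dots,T_{n-1}\}$, with $F$ joined once to each $T_i$ and $O$ joined to each $T_i$ by two parallel edges. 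The map $F\mapsto v$, $T_i\mapsto v_{i+1}$, $O\mapsto u$ is an isomorphism onto $\widehat{S_n}$, and the edge counts agree: $n+2n=3n$.

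The only delicate step is the face identification, namely confirming that each $T_i$ is really a face and that the outer boundary walk is as stated. The hypothesis $n\ge 3$ ensures $C_n$ is a simple cycle, so the faces are as described. If rigor about the embedding is wanted, we would note that $\widehat{C_n}$ is $2$-connected, so every face is bounded by a cycle. The $n+2$ facial walks listed above use every edge exactly twice, and by the Euler count they exhaust all the faces. As a final remark, the dual of a plane graph depends on the embedding. Since we only need the dual of this standard drawing, which is the one used later together with the lemma of \cite{MR1756151,vince}, this choice suffices.
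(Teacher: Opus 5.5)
Your proof is correct and follows the same route as the paper. Both identify the faces of the standard drawing (the inner $n$-gon, the $n$ triangles $v_iu_iv_{i+1}$, and the outer face), note that the inner face meets each triangle once and the outer face meets each triangle twice, and send these faces to the center, the leaves, and $u$ of $\widehat{S_n}$. Your Euler-formula check that the face list is complete, and your remark that the dual depends on the chosen embedding, are welcome rigor that the paper leaves implicit; the remark matters because $\widehat{C_n}$ is only $2$-connected, so flipping some $u_i$ inside the cycle would give a non-isomorphic dual.
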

\begin{proof}
Assume that the vertex set of $\widehat{C_n}$ is $\{u_0, \dots, u_{n-1}, u_0', \dots, u_{n-1}'\}$, such that the subset $\{u_0,\dots, u_{n-1}\}$ induces the cycle $C_n$ and $\{u_0', \dots, u_{n-1}'\}$ induces $\widehat{K_n}$.

On the other hand, assume that the vertex set of $\widehat{S_n}$ is $\{v, v_0, \dots, v_{n-1}, v'\}$, such that $\{v, v_0, \dots, v_{n-1}\}$ induces the star $\widehat{S_n}$, where $v$ has degree $n$, and $v'$ is adjacent twice to each $v_i$.

Let $G$ denote the dual graph of $\widehat{C_n}$. Let $V(G)$ be the vertex set of $G$, where $V(G) = \{f, f_0, \dots, f_{n-1}, f'\}$, such that $f$ corresponds to the face determined by the cycle $C_n$, $f_i$ corresponds to the face determined by the subgraph induced by $\{u_i', u_i, u_{i+1}\}$, and $f'$ corresponds to the outer face of $\widehat{C_n}$.

We observe that $f$ has degree $n$ and is adjacent to each $f_i$ exactly once. Moreover, each $f_i$ has degree $3$; besides being adjacent to $f$, it is adjacent to $f'$ twice.

Let us define $\varphi \colon V(G) \to V(\widehat{S_n})$ as follows:
\[
\varphi(x) = 
\begin{cases}
    v & \text{if $x = f$},\\
    v' & \text{if $x = f'$},\\
    v_i & \text{if $x = f_i$}.
\end{cases}
\]

Then $\varphi$ is a bijection, and it is easy to verify that it is a graph isomorphism.
\end{proof}

Note the Laplacian matrix $L\left(\widehat{S_n}\right)$ of $\widehat{S_n}$ is the $(n+2)\times(n+2)$ matrix
\[
\kbordermatrix{
       &    v_1 &    v_2 & \cdots &    v_n &      v &      u \\
   v_1 &      3 &      0 & \cdots &      0 &     -1 &     -2 \\
   v_2 &      0 &      3 & \cdots &      0 &     -1 &     -2 \\
\vdots & \vdots & \vdots & \ddots & \vdots & \vdots & \vdots \\
   v_n &      0 &      0 & \cdots &      3 &     -1 &     -2 \\
     v &     -1 &     -1 & \cdots &     -1 &      n &      0 \\
     u &     -2 &     -2 & \cdots &     -2 &      0 &     2n \\
         }.
\]


\begin{lemma}\label{lem:SNFSn}
    For $n\geq3$, the SNF of $L\left(\widehat{S_n}\right)$ is $\diag(1,1,3,\dots,3,6n,0)$.
\end{lemma}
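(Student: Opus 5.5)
We reduce $L\left(\widehat{S_n}\right)$ by explicit integer row and column operations. The goal is to reach a block-diagonal form that consists of a small block together with a diagonal block of $3$'s, and then to read off the invariant factors.

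\textbf{Eliminating the $v$ and $u$ rows.} Use the ordering $v_1,\dots,v_n,v,u$ displayed above. Since the rows sum to zero, first add all rows to the last row; this turns the $u$-row into zero and splits off a $0$. We are left with the $(n+1)\times(n+2)$ matrix whose last row is gone.

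Next add rows $v_1,\dots,v_n$ to row $v$. The entries of row $v$ become $3-1=2$ in each $v_i$ column, $n-n=0$ in the $v$ column, and $-2n$ in the $u$ column.

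\textbf{Simplifying the columns.} Subtract column $v_1$ from columns $v_2,\dots,v_n$. Row $v_1$ then reads $(3,-3,\dots,-3,-1,-2)$, and each row $v_i$ with $i\ge2$ has a single $3$ in column $v_i$ besides its $-1$ and $-2$ in the last two columns. Row $v$ becomes $(2,0,\dots,0,0,-2n)$.

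Now use the $-1$ in column $v$ as a pivot:
\begin{itemize}
\item add $3$ times column $v$ to column $v_1$;
\item subtract $2$ times column $v$ from column $u$.
\end{itemize}
This makes the $v_1$ and $u$ entries of every row $v_i$ vanish, leaving $-1$ in column $v$. In column $v_1$, row $v_1$ gets $3-3=0$ and row $v_i$ ($i\ge 2$) gets $-3$. Row $v$ is unaffected since its $v$-entry is $0$.

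\textbf{Row eliminations.} Subtract row $v_1$ from each row $v_i$, $i\ge 2$; this kills their $-1$ in column $v$. Row $v_i$ now has $-3$ in column $v_1$, $3$ in column $v_i$, and $+3$ in columns $v_j$ for $j\ne 1,i$.

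Row $v_1$ now has a unit $-1$ in column $v$. Use it to clear that column, and then clear row $v_1$ by column operations. This gives one invariant factor $1$.

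\textbf{The remaining block.} What remains is the $n\times(n+1)$ matrix with rows $v_2,\dots,v_n,v$ and columns $v_1,\dots,v_n,u$. Its rows $v_i$ are divisible by $3$, and row $v$ is $(2,0,\dots,0,-2n)$.

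Add column $v_1$ times $n$ to column $u$, which makes row $v$ equal to $(2,0,\dots,0,0)$. The $v_i$ rows then get $-3n$ in column $u$. Extracting $\gcd$'s:
\begin{itemize}
\item the ideal of $1\times1$ minors contains $2$ and $3$, hence $1$;
\item all $2\times2$ minors are divisible by $3$;
\item the maximal minors multiply to $|K|=\tau\left(\widehat{S_n}\right)$ divided by earlier factors.
\end{itemize}

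\textbf{Main obstacle: the determinantal divisors.} The cleanest finish is via determinantal divisors of the whole matrix. We need
\[
\Delta_1=\Delta_2=1,\qquad \Delta_j=3^{j-2}\ \text{for } 2\le j\le n,\qquad \Delta_{n+1}=3^{n-1}\cdot 6n,
\]
and the last equality is equivalent to $\tau\left(\widehat{S_n}\right)=2n\cdot 3^{n}$.

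The spanning tree count can be checked by the matrix-tree theorem: delete the $u$ row and column, and compute the determinant of $3I-$(star coupling) via a Schur complement. This gives $3^{n-1}(3n-n)=2n\cdot3^{n-1}$; we will recheck this constant carefully, since it pins the last factor to $6n$.

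The hardest step is showing that the remaining block has exactly one further unit and otherwise only $3$'s up to the last factor. For this we reduce modulo $3$. After the operations above, every row except two (the unit pivots) vanishes mod $3$, so the rank of $L$ over $\mathbb{F}_3$ is $2$. This gives exactly two invariant factors not divisible by $3$, namely the two $1$'s.

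Then, via the reduced form, we argue that there are $n-2$ factors equal to exactly $3$. The last factor is forced by the product $2n\cdot 3^n$ together with divisibility: it equals $2n\cdot3^n/3^{n-1}$ once we correct for the count of $3$'s. Reconciling this exponent count with the determinant computation is the step we expect to need the most care.
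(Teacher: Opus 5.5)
Your reduction has a genuine gap at the end, and two slips feed into it. In ``Row eliminations'', row $v_1$ has $-3$ in \emph{every} column $v_j$ with $j\ge 2$. So row $v_i$ minus row $v_1$ has $6$, not $3$, in column $v_i$, together with $3$ in the other columns $v_j$ and $-3$ in column $v_1$. With your entries the rows $v_2,\dots,v_n$ would all coincide. The remaining block would then have rank $2$ and $L$ would have rank $3$ rather than $n+1$, so the ``reduced form'' you want to use at the end is wrong as written. Also, $\Delta_{n+1}=3^{n-2}\cdot 6n=2n\cdot 3^{n-1}=\tau(\widehat{S_n})$, which is exactly what your Schur complement gives. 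Your values $3^{n-1}\cdot 6n$ and $2n\cdot 3^{n}$ are off by a factor $3$, and this is where your count of $3$'s goes astray.

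The main problem is that the decisive step is only asserted. Your $\mathbb{F}_3$-rank computation is correct (rank $2$). Together with $\gcd(2,3)=1$ it gives the two $1$'s and shows the remaining $n-1$ nonzero invariant factors are multiples of $3$. But the product plus the divisibility chain does \emph{not} force them to be $3,\dots,3,6n$. For $n=4$ these three factors have product $216$, and $(3,3,24)$, $(3,6,12)$, $(6,6,6)$ are all compatible. You would need extra input, such as $\rank_{\mathbb{F}_p}L=n$ for every prime $p\mid 2n$ with $p\neq 3$, plus a separate $3$-adic argument when $9\mid n$.

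The simplest repair is to finish the explicit reduction, which is what the paper does (it splits off $3I_{n-2}$ and reduces a $4\times 4$ block by hand). With corrected entries, start from your block before its last column operation: rows $v_i$ as above with $0$ in column $u$, and row $v=(2,0,\dots,0,-2n)$. Then:
\begin{enumerate}
\item add column $v_1$ to each column $v_j$, $j\ge 2$;
\item add columns $v_2,\dots,v_n$ to column $v_1$;
\item add column $v_1$ to column $u$.
\end{enumerate}
The rows $v_i$ become $3e_{v_i}$ and row $v$ becomes $(2n,2,\dots,2,0)$. Now subtract column $v_2$ from columns $v_3,\dots,v_n$ and add rows $v_3,\dots,v_n$ to row $v_2$. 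This leaves $3I_{n-2}\oplus\left(\begin{smallmatrix}0&3\\ 2n&2\end{smallmatrix}\right)$ and a zero column. The $2\times 2$ block has coprime entries and determinant $-6n$, so its SNF is $\diag(1,6n)$. Together with your first unit pivot and the zero row, this gives $\diag(1,1,3,\dots,3,6n,0)$.
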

\begin{proof}
    Let us start from $L\left(\widehat{S_n}\right)$ by subtracting the $n$-{\it th} row to the first $n-1$ rows, thus we obtain the matrix
    \[
    \begin{bmatrix}
      3 &      0 & \cdots &      0 &     -3 &      0 &      0 \\
      0 &      3 & \cdots &      0 &     -3 &      0 &      0 \\
 \vdots & \vdots & \ddots & \vdots & \vdots & \vdots & \vdots \\
      0 &      0 & \cdots &      3 &     -3 &      0 &      0 \\
      0 &      0 & \cdots &      0 &      3 &     -1 &     -2 \\
     -1 &     -1 & \cdots &     -1 &     -1 &      n &      0 \\
     -2 &     -2 & \cdots &     -2 &     -2 &      0 &     2n \\
    \end{bmatrix}.
    \]
    After adding the first $n-1$ columns to the $n$-{\it th} column, we obtain
    \[
    \begin{bmatrix}
      3 &      0 & \cdots &      0 &      0 &      0 &      0 \\
      0 &      3 & \cdots &      0 &      0 &      0 &      0 \\
 \vdots & \vdots & \ddots & \vdots & \vdots & \vdots & \vdots \\
      0 &      0 & \cdots &      3 &      0 &      0 &      0 \\
      0 &      0 & \cdots &      0 &      3 &     -1 &     -2 \\
     -1 &     -1 & \cdots &     -1 &     -n &      n &      0 \\
     -2 &     -2 & \cdots &     -2 &    -2n &      0 &     2n \\
    \end{bmatrix}.
    \]
    By subtracting the $(n-1)$-{\it th} column to the first $n-2$ columns, we obtain
    \[
    \begin{bmatrix}
      3 &      0 & \cdots &      0 &      0 &      0 &      0 &      0 \\
      0 &      3 & \cdots &      0 &      0 &      0 &      0 &      0 \\
 \vdots & \vdots & \ddots & \vdots & \vdots & \vdots & \vdots & \vdots \\
      0 &      0 & \cdots &      3 &      0 &      0 &      0 &      0 \\
     -3 &     -3 & \cdots &     -3 &      3 &      0 &      0 &      0 \\
      0 &      0 & \cdots &      0 &      0 &      3 &     -1 &     -2 \\
      0 &      0 & \cdots &      0 &     -1 &     -n &      n &      0 \\
      0 &      0 & \cdots &      0 &     -2 &    -2n &      0 &     2n \\
    \end{bmatrix}.
    \]
    Now, adding the first $n-2$ rows to the $(n-1)$-{\it th} row, we obtain the matrix
    \[
    \diag(3,\dots,3)\oplus
    \begin{bmatrix}
      3 &      0 &      0 &      0 \\
      0 &      3 &     -1 &     -2 \\
     -1 &     -n &      n &      0 \\
     -2 &    -2n &      0 &     2n \\
    \end{bmatrix}.
    \]
    By adding the $(n+1)$-{\it th} column to the $n$-{\it th} column, we obtain
    \[
    \diag(3,\dots,3)\oplus
    \begin{bmatrix}
      3 &      0 &      0 &      0 \\
      0 &      2 &     -1 &     -2 \\
     -1 &      0 &      n &      0 \\
     -2 &    -2n &      0 &     2n \\
    \end{bmatrix}.
    \]
    By adding the $n$-{\it th} column to the $(n+2)$-{\it th} column, we obtain
    \[
    \diag(3,\dots,3)\oplus
    \begin{bmatrix}
      3 &      0 &      0 &      0 \\
      0 &      2 &     -1 &      0 \\
     -1 &      0 &      n &      0 \\
     -2 &    -2n &      0 &      0 \\
    \end{bmatrix}.
    \]
    By adding two times the $(n+1)$-{\it th} column to the $n$-{\it th} column, we obtain
    \[
    \diag(3,\dots,3)\oplus
    \begin{bmatrix}
      3 &      0 &      0 &      0 \\
      0 &      0 &     -1 &      0 \\
     -1 &     2n &      n &      0 \\
     -2 &    -2n &      0 &      0 \\
    \end{bmatrix}.
    \]
    By adding $n$ times the $n$-{\it th} row to the $(n+1)$-{\it th} row, we obtain
    \[
    \diag(3,\dots,3)\oplus
    \begin{bmatrix}
      3 &      0 &      0 &      0 \\
      0 &      0 &     -1 &      0 \\
     -1 &     2n &      0 &      0 \\
     -2 &    -2n &      0 &      0 \\
    \end{bmatrix}.
    \]
    Finally, by adding three times the $(n+1)$-{\it th} row to the $(n-1)$-{\it th} row, and subtracting two times the $(n+1)$-{\it th} row to the $(n+2)$-{\it th} row, we obtain
    \[
    \diag(3,\dots,3)\oplus
    \begin{bmatrix}
      0 &     6n &      0 &      0 \\
      0 &      0 &     -1 &      0 \\
     -1 &     2n &      0 &      0 \\
      0 &    -6n &      0 &      0 \\
    \end{bmatrix}.
    \]
    From this, we can see that the Smith normal form of $L\left(\widehat{S_n}\right)$ is $\diag(1,1,3,\dots,3,6n,0)$.
\end{proof}

Now we state the main result of this section.

\begin{theorem}
The Smith normal form of the second Laplacian matrix of the simplicial annulus $O_n$ is ${\sf I}_{3n}\oplus3{\sf I}_{3n-2}\oplus\diag(18n,0)$.
\end{theorem}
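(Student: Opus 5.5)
The plan is to pass to the second adjacency graph and use its planar dual. By Lemma~\ref{lem:LmatofAuxandComplexareequal}, $L^2(O_n)=L(\Gamma_2(O_n))$, and by Lemma~\ref{lem:secondauxiliarygraphOnisomorphicC3n}, $\Gamma_2(O_n)\cong\widehat{C_{3n}}$. So it suffices to compute $\SNF(L(\widehat{C_{3n}}))$. This is a square matrix of size $6n$, since $O_n$ has $n+2n+3n=6n$ edges, matching the $3n+(3n-2)+2$ diagonal entries in the statement.

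The graph $\widehat{C_{3n}}$ is connected, so its Laplacian has rank $6n-1$ and exactly one zero invariant factor. The nonzero invariant factors are then determined by the sandpile group $K(\widehat{C_{3n}})$, with the remaining entries equal to $1$. Next, $\widehat{C_{3n}}$ is a connected plane graph: draw the cycle and place each $u_i$ outside next to the edge $v_iv_{i+1}$. By the duality lemma of \cite{MR1756151,vince}, $K(\widehat{C_{3n}})\cong K(D)$, where $D$ is the dual graph. By Lemma~\ref{lem:dualCnisSn}, $D\cong\widehat{S_{3n}}$.

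Lemma~\ref{lem:SNFSn} with $3n$ in place of $n$ (valid since $3n\ge 3$) gives $\SNF(L(\widehat{S_{3n}}))=\diag(1,1,3,\dots,3,18n,0)$. Its size is $3n+2$, so it has $3n-1$ nontrivial diagonal entries besides the last $0$, namely $3n-2$ threes and one $18n$. Hence
\[
K(\widehat{C_{3n}})\cong\mathbb{Z}_3^{3n-2}\oplus\mathbb{Z}_{18n}.
\]
Since $3\mid 18n$, this is already in invariant-factor form.

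Finally, I write $\SNF(L(\widehat{C_{3n}}))$ as $6n-1$ nonzero invariant factors followed by a single $0$. The $3n-1$ nontrivial ones are $3,\dots,3,18n$, so the remaining $(6n-1)-(3n-1)=3n$ factors equal $1$. This gives ${\sf I}_{3n}\oplus 3{\sf I}_{3n-2}\oplus\diag(18n,0)$.

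The main obstacle is the use of planar duality. The cited lemma is stated for plane graphs, and the dual may carry multiple edges; here there are double edges to $u$, and the sandpile group and Lemma~\ref{lem:SNFSn} both correctly account for these multiplicities. One should also check that the embedding of $\widehat{C_{3n}}$ used in Lemma~\ref{lem:dualCnisSn}, with the $u_i$ outside the cycle, is a valid plane embedding. As a sanity check, the orders should agree: $|K|=3^{3n-2}\cdot 18n=2n\cdot 3^{3n}$ should equal $\tau(\widehat{C_{3n}})$. That holds because contracting the pendant triangles gives the weighted count $3^{3n}\cdot\tau$ of a cycle with edge weights $3/3$ adjusted, which can be verified directly if desired.
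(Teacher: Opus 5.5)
Your proposal is correct and follows the paper's proof essentially step for step. You identify $L^2(O_n)$ with $L(\widehat{C_{3n}})$, pass the sandpile group to the planar dual $\widehat{S_{3n}}$, read off $\mathbb{Z}_3^{3n-2}\oplus\mathbb{Z}_{18n}$ from Lemma~\ref{lem:SNFSn}, and fill the remaining $3n$ diagonal entries with ones using connectivity and the matrix size $6n$. Your closing sanity check is muddled (``edge weights $3/3$ adjusted''), though it is not needed for the proof; the clean count is that each of the $3n$ triangle units contributes $3$ choices if it joins its two cycle vertices and $2$ if not, with exactly one unit broken, so $\tau=3n\cdot 2\cdot 3^{3n-1}=2n\cdot 3^{3n}$.
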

\begin{proof}
    By Lemma~\ref{lem:secondauxiliarygraphOnisomorphicC3n}, $L^2(O_n)=L\left(\widehat{C_{3n}}\right)$.
    We know by Lemma~\ref{lem:dualCnisSn} that the dual graph of $\widehat{C_{3n}}$ is isomorphic to $\widehat{S_{3n}}$, then $K\left(\widehat{C_{3n}}\right)\simeq K\left(\widehat{S_{3n}}\right)$, which in turn are isomorphic to $\bigoplus_{i=1}^{3n-2}\mathbb{Z}_3\oplus\mathbb{Z}_{18n}$ by Lemma~\ref{lem:SNFSn}.
    Since $C_{3n}$ is a connected graph, then there is only one zero in the Smith normal form of $L\left(\widehat{C_{3n}}\right)$.
    And since $C_{3n}$ has $6n$ vertices, then the Smith normal form of $L\left(\widehat{C_{3n}}\right)$ is $I_{3n}\oplus3I_{3n-2}\oplus\diag(18n,0)$.
\end{proof}

It is important to say that computing the SNF of the first Laplacian matrix of the simplicial annulus seems to be much more complicated to be obtained, however, we observed that it is related with the Fibonacci sequence.

\section{$k$-trees}\label{sec:ktrees}

A $k$-{\it tree} is either a complete graph on $k$ vertices or a graph obtained from a smaller $k$-tree by adjoining a new vertex together with $k$ edges connecting it to a $k$-clique.
Regarding a $k$-tree as simplicial complex, all its cliques are allowed.
In this section we compute the SNF of the $k$-Laplacian matrices of $k$-trees.
It is interesting that a generalization of the celebrated formula obtained by Graham-Lovász-Pollak of the determinant of the distance matrix of trees to $k$-trees was obtained by computing the SNF of the $k$-distance matrices of $k$-trees in \cite{MR5026280}.
Finally in Subsection~\ref{sec:cospeccoinv}, we explore computationally how good are the SNF and the spectrum to distinguish $k$-trees.

Harary and Palmer, in \cite{MR228355,MR357214}, counted unlabeled 2-trees in 1968.
The enumeration of unlabeled $k$-trees for $k > 2$ was a long-standing unsolved problem
until the recent solution by Gainer-Dewar in \cite{MR3007180}, using the theory of combinatorial
species.
In \cite{MR3213312}, unlabeled $k$-trees are enumerated by first coloring the vertices using $k+1$ colors in such a way that the vertices of each $(k+1)$-clique use all $k+1$ colors.
See the sequences \href{https://oeis.org/A054581}{A054581} and \href{https://oeis.org/A370770}{A370770} in The On-Line Encyclopedia of Integer Sequences \cite{oeis}.
In \ref{sec:generation of k trees}, we include {\tt sagemath} code that generates the $k$-trees with up to $n$ vertices. 
And in \ref{sec:k Laplacian matrix}, there is a code to compute the SNF of the $d$-Laplacian matrix for $k$-trees.

\begin{figure}[h]
    \centering
    \begin{tabular}{c@{\extracolsep{2cm}}c}
        \begin{tikzpicture}[scale=0.7,thick]
		\tikzstyle{every node}=[minimum width=0pt, inner sep=2pt, circle]
			\draw (120:1) node[draw] (2) {};
			\draw (240:1) node[draw] (3) {};
            \draw (0:1) node[draw] (5) {};
			\draw (3)++(180:1)++(3) node[draw] (4) {};
            \draw (2)++(180:1)++(2) node[draw] (0) {};
			\draw (2)++(180:1)++(3) node[draw] (1) {};
			\draw (3)++(5)++(-60:1) node[draw] (6) {};
			\draw (2)++(5)++(60:1) node[draw] (7) {};
			\draw  (0) edge (2);
			\draw  (1) edge (2);
			\draw  (1) edge (3);
			\draw  (3) edge (5);
			\draw  (5) edge (7);
			\draw  (2) edge (7);
			\draw  (2) edge (5);
			\draw  (5) edge (6);
			\draw  (3) edge (6);
			\draw  (3) edge (4);
			\draw  (1) edge (4);
			\draw  (0) edge (1);
			\draw  (2) edge (3);
		\end{tikzpicture}
        & 
        \begin{tikzpicture}[scale=0.5,thick]
		\tikzstyle{every node}=[minimum width=0pt, inner sep=2pt, circle]
            \draw (0:1) node[draw] (0) {};
			\draw (120:1) node[draw] (1) {};
			\draw (240:1) node[draw] (2) {};
            \draw (0)++(0)++(60:1) node[draw] (3) {};
            \draw (0)++(0)++(-60:1) node[draw] (4) {};
            \draw (3)++(60:1)++(0:1) node[draw] (5) {};
            \draw (3)++(60:1)++(120:1) node[draw] (6) {};
            \draw (4)++(-60:1)++(0:1) node[draw] (7) {};
            \draw (4)++(-60:1)++(-120:1) node[draw] (8) {};
            \draw (1)++(120:1)++(0:-1) node[draw] (9) {};
            \draw (1)++(120:1)++(60:1) node[draw] (10) {};
            \draw (2)++(-120:1)++(0:-1) node[draw] (11) {};
            \draw (2)++(-120:1)++(-60:1) node[draw] (12) {};
            \draw (0) -- (1) -- (10) -- (9) -- (1) -- (2) -- (11) -- (12) -- (2) -- (0) -- (4) -- (8) -- (7) -- (4) -- (3) -- (5) -- (6) -- (3) -- (0);
		\end{tikzpicture}\\
        $T$ & $\Gamma_2(T)$
    \end{tabular}
    \caption{A 2-tree $T$ and its 2-adjacency graph $\Gamma_2(T)$.}
    \label{fig:2tree and auxliliary graph}
\end{figure}


\begin{lemma}\label{lem:AuxGraphofTreeisblock}
    Let $T$ be a $k$-tree with $n$ vertices such that $n>k$.
    Then the $k$-{\it th} adjacency graph $\Gamma_k(T)$ of $T$ is a block graph with $(n-k)$ blocks consisting of $(k+1)$-cliques.
\end{lemma}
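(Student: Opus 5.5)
We argue by induction on $n$, following the recursive definition of a $k$-tree. Note first which cliques are involved. The vertices of $\Gamma_k(T)$ are the $k$-cliques of $T$. Two of them are adjacent exactly when they lie in a common $(k+1)$-clique; since they are distinct $k$-subsets of a $(k+1)$-set, their union is that set. Each $(k+1)$-clique $\beta$ of $T$ contains exactly $k+1$ $k$-cliques, and these are pairwise adjacent, so $\beta$ induces a $K_{k+1}$ in $\Gamma_k(T)$. Every edge of $\Gamma_k(T)$ lies in exactly one such $K_{k+1}$, namely the one coming from the union of its endpoints. We will show that these $n-k$ cliques are precisely the blocks and that $\Gamma_k(T)$ is a connected block graph, meaning every block is complete. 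Here we use the standard facts that a $k$-tree on $n\ge k$ vertices has exactly $n-k$ cliques of size $k+1$ and no larger cliques.

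\textbf{Base case: $n=k+1$.} Then $T=K_{k+1}$. It has $k+1$ $k$-cliques, all pairwise adjacent, so $\Gamma_k(T)\cong K_{k+1}$. This is a single block that is a $(k+1)$-clique.

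\textbf{Inductive step.} Let $T$ have $n>k+1$ vertices, obtained from a $k$-tree $T'$ on $n-1$ vertices by joining a new vertex $w$ to a $k$-clique $\sigma$ of $T'$. The new $k$-cliques of $T$ are the $k$ sets $\tau\cup\{w\}$ with $\tau$ a $(k-1)$-subset of $\sigma$, and the only new $(k+1)$-clique is $\beta=\sigma\cup\{w\}$. Every new $k$-clique contains $w$, and $\beta$ is the only $(k+1)$-clique containing $w$. Hence each new $k$-clique is adjacent only to the other $k$-cliques of $\beta$, namely the other new ones and $\sigma$. Also, no two old $k$-cliques gain an adjacency, since that would require a new $(k+1)$-clique other than $\beta$ that contains them. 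Therefore
\[
\Gamma_k(T)=\Gamma_k(T')\cup K,
\]
where $K\cong K_{k+1}$ is the clique on the $k$-cliques of $\beta$, and $K$ meets $\Gamma_k(T')$ only in the vertex $\sigma$.

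By induction, $\Gamma_k(T')$ is a connected block graph with $n-1-k$ blocks, each a $K_{k+1}$. Gluing a complete graph $K$ at the single vertex $\sigma$ makes $\sigma$ a cut vertex, unless $\Gamma_k(T')$ is a single vertex, which cannot happen here. So $K$ becomes a new block and the old blocks are unchanged. Since every block of the result is complete, $\Gamma_k(T)$ is a connected block graph with $(n-1-k)+1=n-k$ blocks, each a $(k+1)$-clique.

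The main point needing care is the claim that attaching $w$ creates no adjacencies other than those inside $\beta$. We get this from the fact that every $(k+1)$-clique of $T$ either lies in $T'$ or contains $w$. The latter holds because $N(w)=\sigma$, so any clique containing $w$ lies in $\sigma\cup\{w\}$. The remaining steps are routine bookkeeping about blocks.
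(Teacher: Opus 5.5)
Your proof is correct and follows essentially the same route as the paper: induction along the recursive construction of $T$, where attaching the new vertex $w$ to $\sigma$ adds exactly one new $K_{k+1}$ to $\Gamma_k$, glued to the old graph only at the vertex $\sigma$. You also supply the details the paper leaves implicit: that no spurious adjacencies arise among old $k$-cliques, that connectivity carries through the induction, and why the glued clique becomes a new block while the old blocks are unchanged.
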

\begin{proof}
Let $T_{n+1}$ be a $k$-tree with $n+1$ vertices constructed from the $k$-tree $T_n$ with $n$ vertices by adjoining, to a $k$-clique $\alpha\in T_n$, a new vertex $v$ together with $k$ edges connecting each vertex in $\alpha$ with $v$.
Thus $\Gamma_k(T_{n+1})$ is constructed from $\Gamma_k(T_n)$ by adding a $k$-clique such that each of its new vertices are adjacent with the vertex associated with $\alpha$.
Moreover, $\Gamma_k(T_n)$ has $k(n-k)+1$ vertices.
Then the result turns out.
\end{proof}

\begin{theorem}\label{teo:ktrees}
    Let $T$ be a $k$-tree with $n$ vertices such that $n>k$.
    Then, the Smith normal form of $L^k(T)$ is ${\sf I}_{n-k}\oplus(k+1){\sf I}_{(n-k)(k-1)}\oplus[0]$.
\end{theorem}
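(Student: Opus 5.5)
By Lemma~\ref{lem:LmatofAuxandComplexareequal}, $L^k(T)=L(\Gamma_k(T))$, so I will compute the SNF of the Laplacian of the graph $\Gamma_k(T)$. By Lemma~\ref{lem:AuxGraphofTreeisblock}, $\Gamma_k(T)$ is a connected block graph whose $n-k$ blocks are all copies of $K_{k+1}$. Connectivity follows from the inductive construction: each new clique is attached at the vertex corresponding to $\alpha$.

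The plan has three steps.

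\textbf{Step 1 (torsion).} By Lemma~\ref{lema:bloques}, the sandpile group is the direct sum of the sandpile groups of the blocks:
\[
K(\Gamma_k(T))\simeq \bigoplus_{i=1}^{n-k} K(K_{k+1}).
\]
From the introduction, $K(K_{k+1})=\mathbb{Z}_{k+1}^{k-1}$, since $\SNF(L(K_{k+1}))=\diag(1,k+1,\dots,k+1,0)$. Hence
\[
K(\Gamma_k(T))\simeq \mathbb{Z}_{k+1}^{(n-k)(k-1)}.
\]

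\textbf{Step 2 (free rank).} Since $\Gamma_k(T)$ is connected, $L(\Gamma_k(T))$ has exactly one zero invariant factor.

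\textbf{Step 3 (count the ones).} The graph $\Gamma_k(T)$ has $k(n-k)+1$ vertices, so $L^k(T)$ is a square matrix of that size. The cokernel is $\mathbb{Z}\oplus K(\Gamma_k(T))$. The invariant factors are therefore:
\begin{itemize}
\item one zero;
\item $(n-k)(k-1)$ copies of $k+1$;
\item ones filling the remaining positions.
\end{itemize}
The number of ones is
\[
k(n-k)+1-1-(n-k)(k-1)=n-k.
\]
This gives ${\sf I}_{n-k}\oplus(k+1){\sf I}_{(n-k)(k-1)}\oplus[0]$. The structure of finitely generated abelian groups fixes the SNF once the cokernel is known. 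The invariant factors of $\mathbb{Z}_{k+1}^{m}$ are all equal to $k+1$, so they satisfy the divisibility chain and are already in normal form.

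\textbf{Main obstacle.} The only step needing care is applying Lemma~\ref{lema:bloques}, which requires the blocks of $\Gamma_k(T)$ to be exactly these $(k+1)$-cliques. In particular, no two cliques may share more than one vertex, and there must be no extra cycles between cliques.

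I would verify this by induction along the construction in Lemma~\ref{lem:AuxGraphofTreeisblock}. Each new vertex $v$ creates $k$ new $k$-cliques, namely $\alpha\setminus\{x\}\cup\{v\}$. These are pairwise adjacent and adjacent to $\alpha$, forming a new $K_{k+1}$.

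A new $k$-clique has no other neighbours at the moment it is created. The only $(k+1)$-clique containing it is $\alpha\cup\{v\}$, since $v$ is adjacent only to the vertices of $\alpha$. So the new block meets the old graph only at the cut vertex $\alpha$.

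Later additions can create further blocks at these new vertices, but again only through a single cut vertex. So every block is a $K_{k+1}$, and the count of blocks is $n-k$.

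A minor point: the case $k=1$ reduces to ordinary trees, where the formula gives ${\sf I}_{n-1}\oplus[0]$, consistent with the introduction.
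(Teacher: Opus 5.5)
Your proposal is correct and follows essentially the same route as the paper: identify $L^k(T)$ with $L(\Gamma_k(T))$, use that $\Gamma_k(T)$ is a connected block graph made of $n-k$ copies of $K_{k+1}$, apply the block decomposition of the sandpile group with $K(K_{k+1})\simeq\mathbb{Z}_{k+1}^{k-1}$, and fill the remaining diagonal with one zero and $n-k$ ones. Your count of $(n-k)(k-1)$ entries equal to $k+1$ is the right one (the paper's proof misprints it as $k(n-k)-(n+k)$), and your inductive check that each new $K_{k+1}$ meets the old graph only at $\alpha$ fills in detail that the paper's lemma only sketches.
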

\begin{proof}
    By Lemma~\ref{lem:AuxGraphofTreeisblock}, the $k$-{\it th} adjacency graph $\Gamma_k(T)$ of $T$ is a block graph with $(n-k)$ blocks consisting of $(k+1)$-cliques, then $K(\Gamma_k(T))\simeq \oplus_{i=1}^{n-k}\mathbb{Z}_{k+1}^{k-1}$.
    Therefore, the diagonal of the SNF of $L(\Gamma_k(T))$ contains $k(n-k)-(n+k)$ entries equal to $k+1$.
    And, since $\Gamma_k(T)$ is connected, then the diagonal of the SNF of $L(\Gamma_k(T))$ contains only one zero entry, and the other $(n-k)$ diagonal entries are ones.
    Finally, by Lemma~\ref{lem:LmatofAuxandComplexareequal}, the SNFs of $L(\Gamma_k(T))$ and $L^k(T)$ coincide.
\end{proof}

The following question arises: is it possible to describe the SNF of $L^d(T)$ for $d\in\{1,\dots,k-1\}$?
This is not an easy question, as the reader might note by exploring the following code, which uses the functions defined in \ref{sec:generation of k trees} and \ref{sec:k Laplacian matrix}.

\begin{lstlisting}
for k in range(2,6):
    print("\n\n********************* k = " + str(k) + "*********************")
    n = k
    for L in k_trees(k,k+5):
        if len(L) == 1:
            print("*** There is one " + str(k) + "-trees with " + str(n) + " vertices")
        else:
            print("*** There are " + str(len(L)) + " " + str(k) + "-trees with " + str(n) + " vertices")
        for l in L:
            l.show()
            print("graph6 code: " + l.graph6_string())
            for i in range(1,k+1):
                d_laplacian_matrix(l,i)
        n = n + 1
\end{lstlisting}


In particular, computing the SNF of the 1-Laplacian matrix of 2-trees which are outerplanar graphs is not an easy task since computing the SNF of the Laplacian matrices of outerplanar graphs is difficult. 
For instance, the structure of the sandpile group of few subfamilies of the outerplanar graphs have been completely established, see for example \cite{MR3442497,MR4023161,MR3479466}.
Also, the Tutte polynomial and the number of spanning trees of an infinite families of outerplanar, small-world and self-similar graphs were obtained in \cite{MR3046135,MR3083114}.
In \cite{AVi}, it is given a methodology to compute the algebraic structure of the sandpile groups of outerplanar graphs by evaluating the indeterminates of the critical ideals of the weak dual at the lengths of the cycles bounding the interior faces of the plane graph.
Despite these advances, there is still work to be done to completely understand the SNF of the Laplacian matrices of outerplanar graphs, and so the SNF of the 1-Laplacian matrix of 2-trees.

\subsection{Cospectral and coinvariant $k$-trees}\label{sec:cospeccoinv}

In this subsection, we would like to explore if we could use all the SNF of all the Laplacian matrices to distinguish $k$-trees.
As we have seen in Theorem~\ref{teo:ktrees} one matrix is not enough since we already see that the SNF of the $L^k$ matrix is the same for the $k$-trees with $n$ vertices.

Recall that two non-isomorphic graphs with the same spectrum with respect the Laplacian matrix are called {\it cospectral graphs}.
And a graph $G$ is said to be {\it determined} by the Laplacian spectrum, if there is no graph not isomorphic to $G$ whose Laplacian spectrum is the same Laplacian spectrum of $G$.
On the other hand, two non-isomorphic graphs with the same SNF with respect the Laplacian matrix are called {\it coinvariant graphs}.
And a graph $G$ is said to be {\it determined} by the SNF of $L(G)$, if there is no graph not isomorphic to $G$ whose SNF of its Laplacian matrix have the same SNF that $L(G)$.
The {\it graph isomorphism problem} asks to determine whether two graphs are isomorphic, that is to say, whether the adjacency matrices of the two graphs are permutation similar.
Motivated by the graph isomorphism problem, it is of interest what fraction of all graphs is uniquely determined by its spectrum or by its SNF. 
Haemers conjectured that the fraction of graphs on $n$ vertices with a $M$-cospectral mate tends to zero as $n$ tends to infinity. 
Several numerical studies have been performed for graphs, for example, a numerical study for $n\leq 9$ was given by Godsil and McKay \cite{gm}, for $n = 10, 11$ by Haemers and Spence \cite{HS2004} and for $n=12$ by Brouwer and Spence \cite{bs}. 
Aouchiche and Hansen \cite{ah} presented computational results in which they studied cospectrality for the distance, distance Laplacian and distance signless Laplacian matrices of all the connected graphs on up to 10 vertices. 
Pinheiro, Souza and Trevisan \cite{MR4061004} provided some numerical evidence that the complementary spectrum of a graph distinguishes more graphs than other standard graph spectra, but they also showed that it is hard to compute the complementary spectrum.
In \cite{aa}, enumeration results were obtained for the spectrum and the SNF for the matrices $A$, $L$, $Q$, $D$, $D^L$ and $D^Q$.
Recently in \cite{az}, the enumeration results were obtained for the matrices $A^{\tr}$, $A^{\tr}_+$, $D^{\deg}$ and $D^{\deg}_+$.

Let $\mathcal{T}_{k,n}$ denote the set of $k$-trees with $n$ vertices.
In Table~\ref{tab:ktreeswithnvertices}, we show the number $|\mathcal{T}_{k,n}|$ of $k$-trees with $n$ vertices. 
See the sequences \href{https://oeis.org/A054581}{A054581} and \href{https://oeis.org/A370770}{A370770} in The On-Line Encyclopedia of Integer Sequences \cite{oeis}.
In \ref{sec:generation of k trees}, we include {\tt sagemath} code that generates the $k$-trees with up to $n$ vertices.

\begin{table}[h]
    \centering
    \footnotesize
    \begin{tabular}{c|cccccccccccccc}
        $k\backslash n$ & 2 & 3 & 4 & 5 & 6 & 7  & 8  & 9   & 10  & 11    & 12    & 13     \\
        \hline
                      2 & 1 & 1 & 1 & 2 & 5 & 12 & 39 & 136 & 529 & 2,171 & 9,368 & 41,534 \\
                      3 &   & 1 & 1 & 1 & 2 &  5 & 15 &  58 & 275 & 1,505 & 9,003 & 56,931 \\
                      4 &   &   & 1 & 1 & 1 &  2 &  5 &  15 &  64 &   331 & 2,150 & 15,817 \\
                      5 &   &   &   & 1 & 1 &  1 &  2 &   5 &  15 &    64 &   342 &  2,321 \\
                      6 &   &   &   &   & 1 &  1 &  1 &   2 &   5 &    15 &    64 &    342 \\
                      7 &   &   &   &   &   &  1 &  1 &   1 &   2 &     5 &    15 &     64 \\
    \end{tabular}
    \caption{The number $|\mathcal{T}_{k,n}|$ of $k$-trees with $n$ vertices.}
    \label{tab:ktreeswithnvertices}
\end{table}

We say that two non-isomorphic $k$-trees are {\it full coinvariant} if for each $d\in\{1,\dots,k\}$, the SNF of the $d$-Laplacian matrices of both $k$-trees are the same. 
In such case, those $k$-trees are called {\it full coinvariant mates}.
Let $\mathcal{T}^{in}_{k,n}$ denote the set of $k$-trees with $n$ vertices which have a full coinvariant mate.
In Table~\ref{tab:coinvariantktrees}, we show the number $|\mathcal{T}^{in}_{k,n}|$ of $k$-trees with $n$ vertices which have a full coinvariant mate.
Note that the number of $k$-trees with $n$ vertices determined by the SNF of its $d$-Laplacian matrices coincides with $|\mathcal{T}_{k,n}|-|\mathcal{T}^{in}_{k,n}|$.

\begin{table}[h]
    \centering
    \footnotesize
    \begin{tabular}{c|cccccccccccccc}
        $k\backslash n$ & 2 & 3 & 4 & 5 & 6 & 7  & 8  & 9   & 10  & 11    & 12    & 13     \\
        \hline
                      2 & 0 & 0 & 0 & 0 & 2 &  7 & 31 & 122 & 509 & 2,141 & 9,330 & 41,472 \\
                      3 &   & 0 & 0 & 0 & 0 &  0 &  0 &   0 &   0 &     0 &     2 &      6 \\
                      4 &   &   & 0 & 0 & 0 &  0 &  0 &   0 &   0 &     0 &     0 &      0 \\
                      5 &   &   &   & 0 & 0 &  0 &  0 &   0 &   0 &     0 &     0 &      0 \\
                      6 &   &   &   &   & 0 &  0 &  0 &   0 &   0 &     0 &     0 &      0 \\
                      7 &   &   &   &   &   &  0 &  0 &   0 &   0 &     0 &     0 &      0 \\
    \end{tabular}
    \caption{The number $|\mathcal{T}^{in}_{k,n}|$ of full coinvariant $k$-trees with $n$ vertices.}
    \label{tab:coinvariantktrees}
\end{table}

It is interesting to note that the number of full coinvariant trees reduces as $k$ grows.
This suggest the following question: is there exists an $k$ such that all $k$-trees with $n$ vertices are determined by the SNF of its $d$-Laplacian matrices?

Analogously, we say that two non-isomorphic $k$-trees are {\it full cospectral} if for each $d\in\{1,\dots,k\}$, the spectrum of the $d$-Laplacian matrices of both $k$-trees are the same. 
In such case, those $k$-trees are called {\it full cospectral mates}.
Let $\mathcal{T}^{sp}_{k,n}$ denote the set of $k$-trees with $n$ vertices which have a full cospectral mate.
In Table~\ref{tab:coinvariantktrees}, we show the number $|\mathcal{T}^{in}_{k,n}|$ of $k$-trees with $n$ vertices which have a full cospectral mate.
Note that the number of $k$-trees with $n$ vertices determined by the spectrum of its $d$-Laplacian matrices coincides with $|\mathcal{T}_{k,n}|-|\mathcal{T}^{sp}_{k,n}|$.

\begin{table}[h]
    \centering
    \footnotesize
    \begin{tabular}{c|cccccccccccccc}
        $k\backslash n$ & 2 & 3 & 4 & 5 & 6 & 7  & 8  & 9   & 10  & 11    & 12    & 13     \\
        \hline
                      2 & 0 & 0 & 0 & 0 & 0 &  0 &  0 &   2 &   0 &     2 &    24 &     68 \\
                      3 &   & 0 & 0 & 0 & 0 &  0 &  0 &   0 &   0 &     0 &     0 &      2 \\
                      4 &   &   & 0 & 0 & 0 &  0 &  0 &   0 &   0 &     0 &     0 &      0 \\
                      5 &   &   &   & 0 & 0 &  0 &  0 &   0 &   0 &     0 &     0 &      0 \\
                      6 &   &   &   &   & 0 &  0 &  0 &   0 &   0 &     0 &     0 &      0 \\
                      7 &   &   &   &   &   &  0 &  0 &   0 &   0 &     0 &     0 &      0 \\
    \end{tabular}
    \caption{The number $|\mathcal{T}^{sp}_{k,n}|$ of full cospectral $k$-trees with $n$ vertices.}
    \label{tab:cospectralktrees}
\end{table}

Clearly, the performance of the spectrum to distinguish $k$-trees is much better than the SNF.
Again a question arises: is there exists an $k$ such that all $k$-trees with $n$ vertices are determined by the spectrum of its $d$-Laplacian matrices?

\section{The Graham-Lovász-Pollak matrix}\label{sec:GLP}


Hou and Woo \cite{HW} extended the Graham-Lovász-Pollak \cite{GL,GP} celebrated formula $$\det(D(T_{n+1}))=(-1)^nn2^{n-1},$$ for any tree $T_{n+1}$ with $n+1$ vertices to the SNF of the distance matrix, whose proof uses the following matrix introduced by Graham, Lovász and Pollak in \cite{GL,GP}.

\begin{definition}\label{def:P}
    Let $T$ be a tree with $n$ vertices whose vertex set is $\{v_1,\dots,v_n\}$.
    Let $P(T)$ denote the $n \times n$ matrix having $1$ in entry $(i,j)$ if vertex $v_i$ lies on the unique $v_jv_n$-path, and $0$ otherwise. 
\end{definition}

Now, let us recall Hou and Woo result.

\begin{theorem}\cite[Theorem 3]{HW}\label{thm:HouWoo}
Let $T_{n+1}$ be a tree with $n+1$ vertices, then
$\SNF(D(T_{n+1}))=\sf{I}_2\oplus 2\sf{I}_{n-2}\oplus (2n)$.
\end{theorem}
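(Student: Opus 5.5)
We will follow the Graham--Lovász--Pollak strategy: conjugate $D=D(T_{n+1})$ by the unimodular matrix $P=P(T_{n+1})$ of Definition~\ref{def:P}, which turns the distance matrix into a sparse matrix built from $-2{\sf I}$ and rank-one corrections. Label the vertices $v_1,\dots,v_{n+1}$ so that $v_{n+1}$ is a leaf, and order them so that every vertex is preceded by its neighbour towards $v_{n+1}$. Then $P$ is triangular with ones on the diagonal, hence unimodular, and $\SNF(D)=\SNF(P^{T}DP)$. Equivalently, $P^{-1}$ is the signed incidence-type matrix whose column $j$ is $e_j-e_{p(j)}$, where $p(j)$ is the parent of $v_j$ (and column $n+1$ is $e_{n+1}$). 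It is therefore cleaner to compute $M=(P^{-1})^{T} D P^{-1}$ directly.

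The first step is the entry computation. For non-root $i,j$ we have
\[
M_{ij}=D_{ij}-D_{i\,p(j)}-D_{p(i)\,j}+D_{p(i)\,p(j)},
\]
and the standard tree identity gives $-2$ if the edges $v_iv_{p(i)}$ and $v_jv_{p(j)}$ coincide and $0$ otherwise. Indeed, distances to the two endpoints of an edge differ by $\pm1$ according to the side of the edge, so the double difference vanishes unless both edges are the same edge. The row and column of the root $v_{n+1}$ carry the entries $D_{n+1,j}-D_{n+1,p(j)}=1$, together with $M_{n+1,n+1}=0$. Hence
\[
M\sim\begin{bmatrix}-2{\sf I}_n & \mathbf{1}\\ \mathbf{1}^{T} & 0\end{bmatrix}.
\]

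The second step is to reduce this bordered matrix explicitly. Subtract the first row from rows $2,\dots,n$, then add columns $2,\dots,n$ to the first column. This leaves a block containing $2{\sf I}_{n-2}$, together with a small $3\times3$ core involving the entries $\pm2$, $1$, and $-2n$ (the last arising from $\mathbf{1}^T\mathbf{1}$ scaled by the pivots). Clearing with the unit entry $1$ produces two unit invariant factors. The remaining entries are $2$ repeated $n-2$ times and a final entry equal to $\pm 2n$, since the determinant must be $\pm n2^{n-1}$ by the Graham--Lovász--Pollak formula, consistent with $\det D(T_{n+1})=(-1)^n n2^{n-1}$. This gives $\SNF(D)={\sf I}_2\oplus2{\sf I}_{n-2}\oplus(2n)$. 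The divisibility $2\mid 2n$ makes it a valid Smith normal form. For $n=1$ the statement is read with the $2{\sf I}$ block empty, and it is checked directly on the $2\times2$ matrix.

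The main obstacle is the first step: verifying carefully the edge identity for $M_{ij}$ when one index is the root, and making the sign and orientation conventions of $P$ consistent with Definition~\ref{def:P}, so that $P$ really is unimodular with the stated inverse. Once the bordered form is obtained, the reduction in the second step is routine. The gcd of the $2\times2$ minors being $1$ (from the entries $1$ and $2$) confirms that the first two invariant factors are $1$. Likewise, every $k\times k$ minor for $k\ge3$ is divisible by $2^{k-2}$, which pins down the middle invariant factors as $2$.
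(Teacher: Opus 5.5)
The paper does not prove this statement. It quotes it from Hou and Woo and only remarks that their proof goes through the Graham--Lovász--Pollak matrix $P(T)$, which is exactly your route. Your first step is complete and correct. With $Q=P^{-1}$, whose columns are $e_j-e_{p(j)}$ and $e_{n+1}$, the edge-side argument gives
\[
Q^{T}DQ=\begin{bmatrix}-2{\sf I}_n & \mathbf{1}\\ \mathbf{1}^{T} & 0\end{bmatrix}
\]
exactly. Your vertex ordering is not needed for unimodularity, which is fortunate because as written it is inconsistent: the root $v_{n+1}$ is labelled last, so it cannot precede its neighbours. Lemma~\ref{lema:P} gives $\det P=1$ and $P^{-1}=Q$ for any labelling.

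The gap is in the second step, which is where the real work lies. Write $\Delta_k$ for the gcd of the $k\times k$ minors and $d_k$ for the invariant factors.

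\textbf{The stated operations do not split off $2{\sf I}_{n-2}$.} After them you have rows $-2e_i$ for $2\le i\le n$, first row $(-2,0,\dots,0,1)$ and last row $(n,1,\dots,1,0)$. The border row still couples the whole block of $-2$'s, so the claim that the middle invariant factors equal $2$ is never derived.

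\textbf{The minor-counting fallback does not close this.} The facts $\Delta_2=1$, $2^{k-2}\mid\Delta_k$ for $k\ge3$, and $|\det D|=n2^{n-1}$ give divisibility in one direction only. For $n=4$, the diagonal $\diag(1,1,2,4,4)$ satisfies all three.

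\textbf{Fix via the reverse divisibility.} For $3\le k\le n$, take rows $\{1,\dots,k-2,a,n+1\}$ and columns $\{1,\dots,k-2,b,n+1\}$ with $a\neq b$ in $\{k-1,\dots,n\}$. Column $b$ has a single $1$, in row $n+1$. Expanding along it leaves $\begin{bmatrix}-2{\sf I}_{k-2}&\mathbf{1}\\ 0&1\end{bmatrix}$, so this minor is $\pm2^{k-2}$. Hence $\Delta_k=2^{k-2}$ for $2\le k\le n$ and $\Delta_{n+1}=n2^{n-1}$, giving $d_3=\dots=d_n=2$ and $d_{n+1}=2n$.

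\textbf{Alternative fix: finish the elimination.}
\begin{enumerate}[(i)]
\item Subtract column $n$ from columns $1,\dots,n-1$, then add twice the last row to row $n$. This isolates a unit at position $(n+1,n)$.
\item Pivot on the $1$ at position $(n,n+1)$ to isolate a second unit.
\item What remains is $-2({\sf I}_{n-1}+{\sf J}_{n-1})$. Since ${\sf I}_m+{\sf J}_m\sim\diag(1,\dots,1,m+1)$, this is equivalent to $\diag(2,\dots,2,2n)$.
\end{enumerate}

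Finally, $n=1$ is not covered by the statement. $D(T_2)$ is $2\times 2$ with Smith normal form ${\sf I}_2$, while the formula calls for a larger matrix containing the factor $2n=2$. So the statement must be read for $n\ge2$, not ``checked directly''.
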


From the context of sandpile groups, see \cite[Section 4.5]{klivans}, it is known that $\SNF(L(T))=\diag(1,\dots,1,0)$.
This follows since the product of the positive invariant factors of the Laplacian matrix of a connected graph equals the number of spanning trees, but no explicit matrix is known to obtain the SNF of $L(T)$.
It is interesting that the matrix introduced by Graham, Lovász and Pollak in \cite{GL,GP} also can be used to deduce the SNF of the Laplacian matrix of trees, as we will see in Theorem~\ref{teo:main}.

Graham, Lovász and Pollak noticed that $P(T)$ is unimodular, we recall this result next.

\begin{lemma}\label{lema:P}
    Let $T$ be a tree with $n$ vertices whose vertex set is $\{v_1,\dots,v_n\}$.
    Let $P=P(T)$ as Definition~\ref{def:P}.
    And let $Q$ be the $n\times n$ matrix such that it has $1$ in entry $(i,i)$, $-1$ in entry $(i,j)$ if $v_i$ lies on the unique $v_jv_n$-path and $v_i$ is adjacent to $v_j$, and $0$ otherwise.
    Then
    \begin{enumerate}[(i)]
        \item $\det(P) = 1$, and
        \item $Q$ is the inverse of $P$.
    \end{enumerate}
\end{lemma}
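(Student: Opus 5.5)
The plan is to prove (ii) first, namely $QP={\sf I}_n$ (equivalently $PQ={\sf I}_n$, since the matrices are square). Part (i) then follows from a triangularity argument. Root $T$ at $v_n$. For $j\neq n$, write $p(j)$ for the index of the parent of $v_j$, that is, the unique neighbour of $v_j$ on the $v_jv_n$-path. In this language, $P_{ij}=1$ exactly when $v_i$ is an ancestor of $v_j$ or equal to it (the $v_jv_n$-path is the chain from $v_j$ up to the root). Reading the definition of $Q$ in the same way, the off-diagonal entry $Q_{ij}=-1$ occurs exactly when $v_i$ is the parent of $v_j$. This is the off-diagonal pattern of ${\sf I}-E$, where $E$ is the parent matrix with $E_{p(j),j}=1$. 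Hence $Q={\sf I}_n-E$.

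For (ii), I will verify the product column by column. Column $j$ of $P$ is the indicator vector $\chi_j$ of the ancestor chain $S_j=\{v_j,v_{p(j)},v_{p(p(j))},\dots,v_n\}$. Applying $E$ shifts each vertex in $S_j$ to its parent, so $E\chi_j=\chi_j-e_j$: every vertex of $S_j$ except $v_j$ is the parent of exactly one vertex of $S_j$, and the root has no parent. Therefore $(I-E)\chi_j=e_j$, which gives $QP={\sf I}_n$. The case $j=n$ is consistent, since $S_n=\{v_n\}$ and $Ee_n=0$.

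For (i), order the vertices by a linear extension of the ancestor relation, for instance by decreasing depth, and conjugate by the corresponding permutation matrix. In that order $P$ becomes unitriangular: its entries are $1$ on the diagonal, and $P_{ij}=1$ requires $v_i$ to be an ancestor of $v_j$. Conjugating by a permutation does not change the determinant, so $\det P=1$. Alternatively, $\det P\cdot\det Q=1$ with both determinants integers, and the same triangularity gives $\det Q=1$, hence $\det P=1$.

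The main obstacle is only interpretive. The definition of $Q$ must be read correctly: "$v_i$ lies on the $v_jv_n$-path and is adjacent to $v_j$" means precisely that $v_i$ is the parent of $v_j$, because only one neighbour of $v_j$ lies on that path. The diagonal convention also needs care, since the entry $(i,i)$ should be $1$ rather than $1-1$. Once these are settled, the computation is a direct telescoping along the root path.
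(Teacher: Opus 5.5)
Your proof is correct. For (ii) you do essentially what the paper does, but on the other side. The paper computes $PQ$ using that column $j$ of $Q$ is $e_j-e_{p(j)}$. This gives $(PQ)_{ij}=P_{ij}-P_{i\,p(j)}=\delta_{ij}$, because the $v_jv_n$-path is $v_j$ followed by the $v_{p(j)}v_n$-path. You compute $QP=(I-E)P$ instead, which needs the slightly less immediate fact that every vertex of the chain $S_j$ other than $v_j$ has exactly one child in $S_j$. You then get $PQ=I$ from squareness.

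The real difference is in (i). The paper argues by induction on $n$:
\begin{itemize}
\item it deletes a leaf $v_i\neq v_n$;
\item it notes that row $i$ of $P$ is $e_i^{t}$, since a leaf lies on no $v_kv_n$-path except its own;
\item it expands along that row and identifies the remaining minor with $P(T-v_i)$.
\end{itemize}
You instead order the vertices by decreasing depth. After a simultaneous row and column permutation, $P$ (and $Q$) become unitriangular, so $\det P=1$ with no induction and independently of (ii).

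Your route is shorter and more conceptual. It identifies $P$ as the zeta matrix of the ancestor order of the tree rooted at $v_n$, and $Q=I-E$ as its M\"obius matrix, so the whole lemma is M\"obius inversion on a rooted tree. The paper's induction is more hands-on and needs only cofactor expansion. It does require the check that deleting a leaf's row and column yields exactly $P(T-v_i)$.
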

\begin{proof}
$(i)$ Let $T$ be a tree. We will proceed by induction on the order of $T$. If $n = 2$, then
\[
P = \begin{bmatrix}
    1 & 0 \\
    1 & 1
\end{bmatrix},
\]
which implies $\det(P) = 1$.
Suppose the result holds for trees with order $n > 1$. 
If the order of $T$ is $n + 1$, then we can choose a leaf of $T$, say $v_i$ such that $v_i \neq v_n$. Let $v_t$ be adjacent to $v_i$. 
Note that $P_{ik} = 1$ if and only if $k = t$, then $\det(P) = (-1)^{i+i}\det(P[i])$, where $P[i]$ is the matrix $P$ of the tree $T\setminus\{v_i\}$, thus $\det(P) = 1$.

$(ii)$ Let $i,j \in \{1,\dots, n\}$, with $i \neq j$. 
We compute $(PQ)_{ij} = \sum_{k=1}^nP_{ik}Q_{kj}$. 
Let $W = (v_j, v_t, \dots, v_n)$ be the unique $v_jv_n$-path. 
By definition, $Q_{tj} = -1$ and $Q_{jj} = 1$. 
Let $k \neq j, t$, if $Q_{kj} = -1$, then $v_k \in W$ and $v_kv_j \in E(G)$, thus $v_k = v_t$, which is a contradiction. 
Hence
\[
Q_{kj} = \begin{cases}
    1 & \text{if } k = j \\
    -1 & \text{if } k = t \\
    0 & \text{otherwise}.
\end{cases}
\]
Therefore $(PQ)_{ij} = P_{ij} - P_{it}$. 
If $P_{ij} = 1$, then $v_i \in W$. 
Since $i \neq j$, then $v_i$ belongs to the unique $v_tv_n$-path, so $P_{it} = 1$. 
If $P_{ij} = 0$, then $P_{it} = 1$ implies $P_{ij} = 1$, which is a contradiction.
Therefore $(PQ)_{ij} = 0$ whenever $i \neq j$.
If $i = j$, then $(PQ)_{ij} = \sum_{k=1}^nP_{ik}Q_{ki}$. 
Let $Y = (v_i, v_s \dots, v_n)$ be the unique $v_iv_n$-path. 
Similarly,
\[
Q_{ki} = \begin{cases}
    1 & \text{if } k = i \\
    -1 & \text{if } k = s \\
    0 & \text{otherwise}.
\end{cases}
\]
Then $(PQ)_{ij} = P_{ii} - P_{is} = 1 - 0 = 1$.\\ 
Therefore $PQ = I_n$, implying $P^{-1} = Q$. \\
\end{proof}

Now we use matrix $P(T)$ to deduce the SNF of the Laplacian matrix of trees.

\begin{theorem}\label{teo:main}
Let $T$ be a tree with $n$ vertices $v_1, \dots, v_n$.
Let $P=P(T)$ as Definition~\ref{def:P}.
Then, $PL(T)P^t = \diag(1,1,\dots,1,0)$.
\end{theorem}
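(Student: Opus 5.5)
The plan is to compute the product $PL(T)P^t$ entry by entry, reading each entry as a value of the bilinear form associated with $L(T)$. Write $r_i\in\{0,1\}^n$ for the $i$-th row of $P$, viewed as a column vector. Then
\[
(PL(T)P^t)_{ij}=\sum_{a,b}P_{ia}L(T)_{ab}P_{jb}=r_i^t L(T)\, r_j .
\]
For any graph and any vectors $x,y$ we have the standard identity $x^tL(T)y=\sum_{v_av_b\in E(T)}(x_a-x_b)(y_a-y_b)$. This follows directly from $L(T)=\deg(T)-A(T)$ by expanding edge by edge. The problem therefore reduces to understanding the vectors $r_i$ combinatorially.

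Root $T$ at $v_n$. By Definition~\ref{def:P}, $P_{ia}=1$ exactly when $v_i$ lies on the $v_av_n$-path, that is, when $v_a$ is a descendant of $v_i$ (including $v_i$ itself). Hence $r_i$ is the indicator vector of the vertex set $D_i$ of the subtree hanging from $v_i$.

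The key step is to determine, for each edge $v_av_b$, when $(r_i)_a-(r_i)_b\neq 0$, i.e.\ when the edge has exactly one endpoint in $D_i$.
\begin{enumerate}[(i)]
\item If $i=n$, then $D_n=V(T)$, so no edge crosses and $r_n^tL(T)=0$. Thus the $n$-th row and column of $PL(T)P^t$ vanish.
\item If $i\neq n$, let $p(i)$ be the parent of $v_i$, i.e.\ the next vertex on the $v_iv_n$-path. Since $D_i$ induces a subtree and the only vertex outside $D_i$ adjacent to it is $v_{p(i)}$, the edge $e_i=v_iv_{p(i)}$ is the unique edge leaving $D_i$. On that edge the difference is $\pm1$.
\end{enumerate}

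Consequently, for $i,j\neq n$ the sum $\sum_{e}(\cdot)(\cdot)$ has at most one nonzero term, and it is nonzero only when $e_i=e_j$. The map $i\mapsto e_i$ is a bijection between non-root vertices and edges, so $e_i=e_j$ forces $i=j$. In that case the entry is $(\pm1)^2=1$. This gives $PL(T)P^t=\diag(1,\dots,1,0)$, with the zero in position $n$.

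Finally, by Lemma~\ref{lema:P}, $P$ is unimodular with inverse $Q$. Hence $L(T)\sim\diag(1,\dots,1,0)$, recovering $\SNF(L(T))$ through an explicit equivalence, as intended.

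The only delicate point is the claim in (ii) that $e_i$ is the unique edge crossing $D_i$. I would justify it from the tree structure: any other edge leaving $D_i$ would create a second path from a descendant of $v_i$ to $v_n$ avoiding $v_i$, contradicting uniqueness of paths in $T$. Everything else is bookkeeping.
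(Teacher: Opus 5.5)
Your proof is correct, but it takes a different route from the paper's proof of this theorem. The paper computes $L(T)P^t$ entrywise, $(LP^t)_{ij}=\sum_k L_{ik}P_{jk}$, with cases according to whether $j=n$, whether $i=j$, whether $P_{ji}=1$, and whether $v_iv_j\in E(T)$. It shows the result equals $Q\,\diag(1,\dots,1,0)$, where $Q=P^{-1}$ is the explicit inverse from Lemma~\ref{lema:P}(ii), and then multiplies by $P$ on the left. You instead evaluate $r_i^tL(T)r_j$ as a sum over edges. Everything then reduces to one fact: each subtree $D_i$ with $i\neq n$ has the single boundary edge $v_iv_{p(i)}$, and these edges are pairwise distinct. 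This removes the need for the inverse formula and the subcase analysis, and it shows what is going on. Let $B$ be the $n\times(n-1)$ incidence matrix whose column for $v_jv_{p(j)}$ has $+1$ at $v_j$ and $-1$ at $v_{p(j)}$. Then $L(T)=BB^t$, and $PB$ is $I_{n-1}$ with a zero row appended. The first $n-1$ columns of $Q$ are exactly $B$, which is what the paper's relation $L(T)P^t=Q\,\diag(1,\dots,1,0)$ encodes. Your argument is also exactly the indicator-vector method the paper itself uses for block graphs in Theorem~\ref{teo:main-block-L}, so you have proved the tree case the way the paper proves the generalization. The paper's version, by contrast, makes the role of $Q=P^{-1}$ explicit at the cost of a longer verification. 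Unimodularity of $P$ is needed only for your final SNF remark, not for the matrix identity itself. One minor tidy-up: your crossing-edge justification (a second path avoiding $v_i$) does not literally cover an edge $v_iv_b$ with $v_b\notin D_i\cup\{v_{p(i)}\}$. It is cleaner to note that every edge has the form $v_av_{p(a)}$, and such an edge has exactly one endpoint in $D_i$ if and only if $a=i$.
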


\begin{proof} 
We will show that $L(T)P^t = Q \diag(1,1,\dots,1,0)$. 
Let $i, j \in \{1,\dots, n\}$. 
Without loss of generality, assume $\{v_1,\dots,v_t\}$ are the vertices adjacent to $v_i$. 
In the following, to simplify the notation $L=L(T)$.
Then
\begin{equation}
\label{a}
(LP^t)_{ij} = \sum_{k=1}^nL_{ik}P_{kj}^t = \sum_{k=1}^nL_{ik}P_{jk}.
\end{equation}

{\bf Case 1:} $j = n$. 
As $P_{nk} = 1$ for all $k$, then Equation $(\ref{a})$ becomes $\sum_{k=1}^nL_{ik} = 0$.

{\bf Case 2:} $i \neq j$ and $j\neq n$. Consider the following subcases.

{\bf Subcase 2.1:} $P_{ji} = 0$ and $v_iv_j \notin E(G)$. 
We have $Q_{ij} = 0$. 
Furthermore, Equation $(\ref{a})$ becomes $L_{ii}P_{ji} - \sum_{k=1}^tP_{jk} = - \sum_{k=1}^tP_{jk}$. 
Let $k \in \{1,\dots, t\}$ and $W = (v_k, v_s, \dots, v_n)$ be the unique $v_kv_n$-path. 
If $P_{jk} = 1$, then $v_j \in W$, and if $v_s = v_i$, then $P_{ji} = 1$, which is a contradiction. 
If $v_i \notin W$, then $\{v_i\} \cup W$ is the unique $v_iv_n$-path and $v_j \in \{v_i\} \cup W$, which is a contradiction. 
Therefore $P_{jk} = 0$ for all $k \in \{1,\dots, t\}$, thus Equation $(\ref{a})$ becomes $- \sum_{k=1}^tP_{jk} = 0 = Q_{ij}$.

{\bf Subcase 2.2:} $P_{ji} = 0$ and $v_iv_j \in E(G)$. 
If $W$ is the $v_iv_n$-path, then $v_j \notin W$, hence $\{v_j\}\cup W$ is the unique $v_jv_n$-path. 
Thus $Q_{ij} = -1$. 
Let us assume that $\{v_j\}\cup W = \{v_j,v_i,v_s,\dots, v_n\}$ for some $s \in \{1,\dots,t\}$. 
Then for every $k \in \{1,\dots,t\}\setminus\{s\}$, the $v_kv_n$-path is $\{v_k\} \cup W$, which does not contain $v_j$. 
Therefore, $P_{jk} = 0$ and $P_{js} = 1$, thus Equation $(\ref{a})$ becomes $L_{ii}P_{ji} - \sum_{k=1}^{t}L_{ik}P_{jk} = -P_{jj} = -1 = Q_{ij}$.

{\bf Subcase 2.3:} $P_{ji} = 1$ and $v_iv_j \in E(G)$. 
We have $Q_{ij} = 0$, since $W = (v_i,v_j,\dots,v_n)$ is the $v_iv_n$-path and $v_i \notin P\setminus\{v_i\}$, which is the $v_jv_n$-path. 
Moreover, $v_j$ is in all paths from vertices adjacent to $v_i$ towards $v_n$, so $P_{jk} = 1$ for all $k \in \{1,\dots,t\}$. 
Therefore, Equation $(\ref{a})$ becomes $L_{ii}P_{ji} - \sum_{k=1}^nP_{jk} = L_{ii} - L_{ii} = 0 = Q_{ij}$.

{\bf Subcase 2.4:} $P_{ji} = 1$ and $v_iv_j \notin E(G)$. 
We have $q_{ij} = 0$.
Let $W = (v_i,v_s, \dots, v_j, \dots, v_n)$ be the $v_iv_n$-path. 
Notice that $v_j$ belongs to all paths from vertices adjacent to $v_i$ towards $v_n$. 
Hence Equation $(\ref{a})$ becomes $L_{ii}P_{ji} - \sum_{k=1}^nP_{jk} = L_{ii} - L_{ii} = 0 = Q_{ij}$.

{\bf Case 3:} $i = j$, $j \neq n$. 
By definition, $Q_{ii} = 1$. 
If $W = (v_i,\dots, v_n)$ is the $v_iv_n$-path, then $P_{ik} = 1$ if $k \in \{1,\dots, t\}\setminus\{s\}$ and $P_{is} = 0$. 
Thus Equation $(\ref{a})$ becomes $L_{ii}P_{ji} - \sum_{k=1}^nP_{jk} = L_{ii} - (L_{ii}-1) = 1 = Q_{ii}$. 

Therefore, $L(T)P^t = Q \diag(1,1,\dots,1,0)$. 
Hence, by Lemma~\ref{lema:P}, 
\[
PL(T)P^t = \diag(1,1,\dots,1,0).
\]
\end{proof}

Recall that a {\it block graph} or {\it clique tree} is a graph in which every block is a clique. 
Thus, if $G$ is a block graph, then we can extend Definition~\ref{def:P} to $G$ as follows $P=P(G)$ is the $|G|\times |G|$ matrix with $P_{i,j}=\begin{cases}
    1, & \text{if }v_i\text{ lies on the shortest }v_jv_n-\text{path}\\
    0, & \text{otherwise}
\end{cases}$

\begin{theorem}\label{teo:main-block-L}
    Let $G$ be a connected block graph with $n$ vertices, where $v_n$ is not a cut vertex in a maximal clique with only one cut vertex 
    (we call this the root vertex), and let $P=P(G)$. Also, let us assume that the vertices are ordered such that the vertices in any given clique are consecutive, with the condition that a cut-vertex belongs (with respect to this labeling) to the block closer to the root. 
    Then, $$PL(G)P^t =  \bigoplus_{i=1}^k B_{s_i}\oplus [0], \text{ where } B_s = sI_{s-1}-J_{s-1}\text{ for } l\geq 3, \text{and } B_2=I_1=[1].$$
\end{theorem}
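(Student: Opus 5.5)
The plan is to avoid the entrywise case analysis of Theorem~\ref{teo:main} and instead read the rows of $P$ as indicator vectors of nested vertex sets, so that $PL(G)P^t$ becomes a matrix of quadratic-form evaluations of $L(G)$. Two things have to be shown: that $P$ is unimodular, and that the congruence $PL(G)P^t$ has the stated block-diagonal form.

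\textbf{Reading the rows of $P$.} Orient $G$ toward the root $v_n$. In a block graph the shortest $v_jv_n$-path is unique, so $P$ is well defined. Every block $B$ not containing $v_n$ has a unique cut vertex $w_B$ closest to the root, which we call its parent. The remaining $s_B-1$ vertices of $B$ are its children. Row $i$ of $P$ is the indicator vector $x_{D_i}$ of
\[
D_i=\{v_j : v_i \text{ lies on the shortest } v_jv_n\text{-path}\},
\]
the set of descendants of $v_i$ including $v_i$ itself. These sets form a laminar family, and $D_n=V(G)$.

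\textbf{Unimodularity of $P$.} Order the vertices so that every descendant comes before its ancestors; the hypothesis on the labeling (consecutive blocks, with each cut vertex assigned to the block nearer the root) is used to get such an order. In this order $P$ is triangular with ones on the diagonal, so $\det P=1$. This is the analogue of Lemma~\ref{lema:P}(i), so $P$ is unimodular.

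\textbf{Computing the entries.} For any vertex sets $A,C$,
\[
x_A^tL(G)x_C=\sum_{uv\in E(G)}\bigl(x_A(u)-x_A(v)\bigr)\bigl(x_C(u)-x_C(v)\bigr),
\]
so only edges in the boundary of both $A$ and $C$ contribute. The key structural claim is the following. If $v_i\neq v_n$ is a child in block $B$, then the edges leaving $D_i$ are exactly the $s_B-1$ edges from $v_i$ to the other vertices of $B$, namely the parent $w_B$ and the siblings of $v_i$. This holds because every other neighbour of $v_i$ lies in a block hanging below $v_i$, hence in $D_i$, and no other vertex of $D_i$ has a neighbour outside $D_i$. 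From this claim the entries follow.
\begin{itemize}
\item The diagonal entries are $(PLP^t)_{ii}=s_B-1$.
\item For two distinct children $v_i,v_j$ of the same block $B$, the only common boundary edge is $v_iv_j$, and it contributes $(1)(-1)=-1$.
\item For children of different blocks the boundaries share no edge, so the entry is $0$.
\item Since $x_{D_n}=\mathbf 1$ and $L(G)\mathbf 1=0$, the row and column of $v_n$ vanish.
\end{itemize}
Grouping the children of each block consecutively, which the labeling hypothesis provides, gives the diagonal block $s_BI_{s_B-1}-J_{s_B-1}$ for each non-root block $B$. For $s_B=2$ this is $[1]$. Together with the final $[0]$, this is the claimed decomposition.

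\textbf{Main obstacle.} The expected difficulty is the structural claim, and in particular the role of the root hypothesis. Because $v_n$ lies in a leaf block and is not a cut vertex, the block containing $v_n$ has parent $v_n$ itself and children all its other vertices. That block must also contribute an $s\times s$-type block, so one has to check that the count of blocks $k$, and the sizes $s_i$, are exactly right for that block too. I would verify this first on small examples such as $K_s$ and two triangles sharing a vertex.
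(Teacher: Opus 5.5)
Your proposal is correct and is essentially the paper's proof. In both, the rows of $P$ are indicator vectors of the descendant sets, and the entries of $PL(G)P^t$ are evaluated edge by edge through $x_A^tL(G)x_C=\sum_{uv}(x_A(u)-x_A(v))(x_C(u)-x_C(v))$. Your single boundary claim for $D_i$ does the work of the paper's Cases 2--4 and already covers the root block (its children contribute $B_{s}$ like every other block), so the worry in your last paragraph is settled.

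One small correction. The labeling hypothesis does not by itself put descendants before ancestors, but you do not need it to, because $\det P$ is unchanged by a simultaneous permutation of rows and columns. Also, unimodularity of $P$ is not actually part of this statement.
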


\begin{proof}
    Let $P=\begin{pmatrix}
        \mathbf{r}_1\\
        \mathbf{r}_2\\
        \vdots\\
        \mathbf{r}_{n-1}\\
        \mathbf{r}_n
    \end{pmatrix}$ and
    note that, by definition, the row vector $\mathbf{r}_i$ is the indicator vector of the set $U_i=\{v_j\ |\ v_i\text{ is in the shortest path from }v_j\text{ to }v_n \}\subset V(G)$. Now, let us set $M=PL(G)P^t$. We will prove the result by explicitly computing the entries $M_{i,j}=\mathbf{r}_iL(G)\mathbf{r}_j^t$.\\
    We will use the fact that given two vectors $\mathbf{x},\mathbf{y}\in \{0,1\}^{V(G)}$, then $$\displaystyle\mathbf{x}L(G)\mathbf{y}^t=\sum_{(u,v)\in E(G)}(x_u-x_v)(y_u-y_v)$$
    Moreover, if $\mathbf{x}$ and $\mathbf{y}$ are indicator vectors of $X,Y\subseteq V(G)$, respectively. Then,
    $$\displaystyle\mathbf{x}L(G)\mathbf{y}^t=e(X\cap Y,(X\cup Y)^c)-e(X\setminus Y,Y\setminus X),$$
    where $e(X,Y)$ is the number of edges with one endpoint in $X$ and the other in $Y$.\\
    \textbf{Case 1: Last row and column.} Since $\mathbf{r}_n=\mathbf{1}$, $\mathbf{1}L(G)=\mathbf{0}$, and $L(G)\mathbf{1}^t=\mathbf{0}^t$. Then 
    \[M_{i,n}=\mathbf{r}_1L(G)\mathbf{1}^t=0\text{ and }M_{n,j}=\mathbf{1}L(G)\mathbf{r}_j=0\text{ for all }1\leq i,j\leq n\]
    This yields the $(0)$ block in $M$.\\
    \textbf{Case 2: Diagonal entries in a given block.} If  $v_i$ is not a cut-vertex in a block $B_s\subset G$ with $s$ vertices. Then $U_i=\{v_i\}$, and $$M_{i,i}=\mathbf{r}_iL(G)\mathbf{r}_i^t=e(U_i,U_i^c)=e(\{v_i\},V(G)\setminus \{v_i\})=\deg(v_i)=s-1.$$ 
    A block has one or more cut-vertices. Let us name the cut-vertex closest to $v_n$ as \textit{the exit vertex} of the block. Thus, the above also holds if $v_i$ is a non-exit cut-vertex. On the other hand, if $v_i$ is the exit vertex, then $M_{i,i}=e(U_i,U_i^c)=s'-1$ where $B_{s'}$ is the other block to which $v_i$ belongs that is closest to $v_n$ (note that $v_i$ can belong to more than $2$ blocks).\\ 
    \textbf{Case 3: Non-diagonal entries in a given block.} If $i\neq j$ and $v_i$ and $v_j$ are in the same block $B_s$. Moreover, assume that none of these vertices is the exit vertex, then $U_i\cap U_j=\emptyset$ and $M_{i,j}=-e(U_i,U_j)=-1$. If $v_j$ is the exit vertex, then $U_i\setminus U_j=\emptyset$ and $M_{i,j}=e(U_i,U_j^c)=0$.\\
    Thus, case 2 and 3 gives us the blocks $B_{s_i}$ in $M$.\\
    \textbf{Case 4: Other entries} Now, we have to make sure that the other entries are indeed equal to $0$. First, similarly to the previous case, if $v_i$ and $v_j$ are not cut-vertices in different blocks, then $M_{i,j}=-e(U_i,U_j)=0$. Second, without loss of generality, if $v_i$ is not a cut-vertex but $v_j$ is a cut-vertex in a different block, then 
    \[M_{i,j}=\begin{cases}
        e(\{v_i\},U_j^c)& \text{if }v_i\in U_j\\
        -e(\{v_i\},U_j) & \text{otherwise}.
    \end{cases}\]
    In both cases $M_{i,j}=0$. Finally, if $v_i$ and $v_j$ are both cut vertices in different blocks,
    then 
    \[M_{i,j}=\begin{cases}
        e(U_i,U_j^c)& \text{if }v_i\in U_j\\
        -e(U_i,U_j) & \text{otherwise}.
    \end{cases}\]
    Thus, similarly to the sub-case above, $M_{i,j}=0$. Hence, we have the result.
\end{proof}

For $k$-trees we have the following.

\begin{corollary}
    Let $G$ be a $k$-tree and $G^k$ be its $k$-adjacency graph. 
    Then $$PL(G^k)P^t=\left(\bigoplus_{i=1}^{n-k} L(K_k)\right)\oplus [0]=\left(\bigoplus_{i=1}^{n-k} (k+1)I_k-J_k\right)\oplus [0].$$ 
\end{corollary}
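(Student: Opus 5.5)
The plan is to derive the corollary directly from Theorem~\ref{teo:main-block-L}, applied to the block graph $G^k=\Gamma_k(G)$. I will use Lemma~\ref{lem:AuxGraphofTreeisblock} to describe $G^k$, check the hypotheses of Theorem~\ref{teo:main-block-L}, and then read off the blocks.

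First, when $n>k$, Lemma~\ref{lem:AuxGraphofTreeisblock} says that $G^k$ is a connected block graph. It has exactly $n-k$ blocks, each a $(k+1)$-clique, and $k(n-k)+1$ vertices. So in the notation of Theorem~\ref{teo:main-block-L} every block size is $s_i=k+1$, and the number of blocks is $n-k$.

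Second, I must check the labeling hypotheses of Theorem~\ref{teo:main-block-L}.
\begin{enumerate}[(i)]
\item \emph{Choice of root.} A block graph with at least one block has a leaf block, i.e.\ a block containing at most one cut vertex. If $G^k$ is a single clique ($n=k+1$), I take any vertex as $v_n$. Otherwise I take a non-cut vertex of a leaf block as $v_n$. Such a vertex exists because a leaf block has $k+1\geq 3$ vertices and only one cut vertex.
\item \emph{Ordering.} I root the block-cutvertex tree at the block containing $v_n$. I list the blocks in an order compatible with this tree. Within each block I put its vertices consecutively, and I assign each cut vertex to the block nearer the root, i.e.\ the block in which it is the exit vertex of its child blocks.
\end{enumerate}
This is exactly the labeling convention stated in the theorem. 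It makes $P(G^k)$ well defined, since shortest paths in block graphs are unique, and Lemma~\ref{lema:P}-type unimodularity still holds.

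Third, I apply the theorem. It gives
\[
PL(G^k)P^t=\bigoplus_{i=1}^{n-k}B_{k+1}\oplus[0],\qquad B_{k+1}=(k+1)I_k-J_k .
\]
Finally, $(k+1)I_k-J_k$ is the matrix the corollary labels $L(K_k)$. I will note this identification as the reduced Laplacian of $K_{k+1}$, that is, $L(K_{k+1})$ with one row and column deleted. The stated notation $L(K_k)$ should be read this way, since $L(K_k)$ itself is $kI_k-J_k$.

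As a sanity check, the SNF of $(k+1)I_k-J_k$ is $\diag(1,k+1,\dots,k+1)$. This agrees with Theorem~\ref{teo:ktrees}.

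The main obstacle is verifying that $G^k$ admits a root and an ordering satisfying the theorem's hypotheses, since the theorem's statement is somewhat informal. I will handle this through the block-cutvertex tree construction in (ii), plus the degenerate case $n=k+1$, where the theorem applies trivially with one block.
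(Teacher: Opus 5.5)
Your proposal is correct and follows the route the paper intends: the corollary is stated without proof as a direct consequence of Lemma~\ref{lem:AuxGraphofTreeisblock} together with Theorem~\ref{teo:main-block-L}, taking every $s_i=k+1$. You are also right that the middle expression $L(K_k)$ is a notational slip and should be read as the reduced Laplacian $(k+1)I_k-J_k$ of $K_{k+1}$. The case $n=k$, which you omitted, is trivial: there $G^k$ is a single vertex and both sides equal $[0]$.
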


We will finish this section extending Definition~\ref{def:P} to any connected graph, that is, graphs with possibly more than one unique path from any two vertices. 
For a graph $G$ with $n$ vertices, let $N$ be the $n\times n$ matrix with entry $n_{ij}= 1$ if vertex $i$ belongs to \emph{one} minimum path from vertex 1 to vertex  $j$; otherwise $n_{ij}=0$.
Furthermore, for a vertex $i$, let $\ell_{i}$ be the number of minimum paths from $1$ to $i$.
\begin{proposition}
    Let $N^{*}$ be the matrix with entry
    $$n_{ij}^{*}=\begin{cases}
        1,&\text{if } i=j\\
        -1,&\text{if } d(i,j)=1\text{ and } d(1,j)=1+d(1, i)\\
        \ell_{j}-1,&\text{if } \ell_{j}>2,~ d(1,i)=1\text{ and } d(1,j)=1+d(i,j)\\
        1,&\text{ if }\ell_{j}=2\text{ and } i=1\\
        0,&\text{otherwise}
    \end{cases}.$$
    Then $NN^{*}=I$ where $I$ is the identity matrix. 
\end{proposition}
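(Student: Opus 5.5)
The plan is to prove $NN^{*}=I$ column by column. Fix a target vertex $j$ and compute $(NN^{*})_{ij}=\sum_k n_{ik}n^{*}_{kj}$ for every $i$. Write $\mathrm{Pred}(j)=\{k: d(k,j)=1,\ d(1,j)=1+d(1,k)\}$ for the set of predecessors of $j$ on shortest $1j$-paths. Reading off the definition of $N^{*}$, column $j$ has
\begin{itemize}
\item a $1$ in row $j$;
\item a $-1$ in each row $k\in\mathrm{Pred}(j)$;
\item the correction value $\ell_j-1$ in each row $k$ with $d(1,k)=1$ and $d(1,j)=1+d(k,j)$, present only when $\ell_j>2$;
\item a $1$ in row $1$, present only when $\ell_j=2$.
\end{itemize}
Hence
\[
(NN^{*})_{ij}=n_{ij}-\sum_{k\in \mathrm{Pred}(j)} n_{ik}+\Delta_{ij},
\]
where $\Delta_{ij}$ collects the correction terms. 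This mirrors the proof of Lemma~\ref{lema:P}(ii), where $\mathrm{Pred}(j)$ is a single vertex and no correction appears.

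The key combinatorial step is the following decomposition of shortest paths. A vertex $i\neq j$ lies on some shortest $1j$-path if and only if it lies on a shortest $1k$-path for at least one $k\in \mathrm{Pred}(j)$. Consequently $\sum_{k\in\mathrm{Pred}(j)} n_{ik}$ equals $n_{ij}$ times the number $m_{ij}\geq 1$ of predecessors whose geodesic set contains $i$. So for $i\neq j$ the uncorrected sum is $-(m_{ij}-1)n_{ij}$, and $\Delta_{ij}$ must cancel this overcount exactly.

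For $i=j$, I will check separately that $j$ never lies on a geodesic to one of its own predecessors. I will also check that none of the correction terms land in row $j$, so that the diagonal entry is $1$.

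I will then split into cases according to $d(1,i)$: first $i=1$, then $d(1,i)=1$, then $d(1,i)\geq 2$.
\begin{itemize}
\item For $i=1$, $n_{1k}=1$ for every $k$, so $m_{1j}=|\mathrm{Pred}(j)|$. The corrections in row $1$, together with the correction rows $k$ at distance one (each with $n_{1k}=1$), must sum to $|\mathrm{Pred}(j)|-1$.
\item For $d(1,i)=1$ I will use that the correction rows are exactly the neighbours of $1$ that lie on geodesics to $j$.
\item For deeper $i$ I will argue that $m_{ij}=1$, so that no correction is needed there.
\end{itemize}

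The main obstacle is precisely this cancellation. The correction weights in $N^{*}$ are expressed through $\ell_j$, the number of shortest paths, whereas the overcount is governed by $|\mathrm{Pred}(j)|$ and by $m_{ij}$. These coincide only under structural hypotheses, for instance when all branching of geodesics happens at the first step out of vertex $1$. I would first verify the $i=1$ case on small examples such as even cycles and $K_{2,3}$ to pin down the exact relationship. If the identity as stated only holds in that regime, I would make the needed hypothesis explicit, namely that every vertex at distance at least $2$ from $1$ has geodesics diverging only at vertex $1$, and carry out the case analysis under it.
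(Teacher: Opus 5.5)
Your suspicion that the cancellation only works under extra structure is right, but the proposal stops where the real work begins, and the step you defer cannot be completed. The identity $NN^{*}=I$ is false as stated. Take $K_{2,3}$ with parts $\{1,x\}$ and $\{a,b,c\}$, so that $d(1,x)=2$ and $\ell_x=3$. Row $1$ of $N$ is all ones, so $(NN^{*})_{1x}$ is the sum of column $x$ of $N^{*}$:
\begin{itemize}
\item $n^{*}_{xx}=1$;
\item rows $a,b,c$ satisfy both the second and the third case (these overlap when $d(1,j)=2$ and $\ell_j>2$), so they carry a common value $t\in\{-1,2\}$;
\item $n^{*}_{1x}=0$, since $d(1,1)\neq 1$ and $\ell_x\neq 2$.
\end{itemize}
Hence $(NN^{*})_{1x}=1+3t\in\{-2,7\}$, never $0$.

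Your fallback hypothesis, read as ``distinct shortest $1j$-paths are internally disjoint'', does not rescue the statement. $K_{2,3}$ already satisfies it. So does the theta graph formed by three internally disjoint $1j$-paths of length $3$. There the third case puts $\ell_j-1=2$ in the rows of the three neighbours of $1$, giving $(NN^{*})_{1j}=1-3+6=4$. Your own bookkeeping explains the failure. Under that hypothesis every $i\notin\{1,j\}$ on a $1j$-geodesic has $m_{ij}=1$. So the whole overcount $|\mathrm{Pred}(j)|-1=\ell_j-1$ sits in row $1$, and it must be cancelled by $n^{*}_{1j}=\ell_j-1$ alone, with zeros in the neighbour-of-$1$ rows. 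That is the fourth case extended to all $\ell_j\geq 2$, not the third case. Without the hypothesis, your planned claim that $m_{ij}=1$ for deeper $i$ is also false. With edges $1a,ab,bc,bd,cj,dj$ one has $\ell_j=2$ but $m_{bj}=2$, and the stated $N^{*}$ gives $(NN^{*})_{bj}=-1$.

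Here is what is true. The relation $i\preceq j$ iff $d(1,i)+d(i,j)=d(1,j)$ is a partial order, $N$ is its zeta matrix, and $N^{-1}$ is its M\"obius function. That function is not determined by $\ell_j$. For example, in $Q_3$ with $1=000$ one has $\mu(000,111)=-1$ and $\mu(i,111)=1$ for each neighbour $i$ of $000$, whereas $\ell_{111}-1=5$. The subsequent corollary still holds: $N$ is unitriangular once the vertices are ordered by distance from $1$, so it is unimodular.

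The paper's own proof uses the same column-wise cancellation as yours, pairing the $+1$ of $n^{*}_{jj}$ against the $-1$ of a predecessor. It tacitly assumes a single predecessor does the cancelling, i.e.\ $m_{ij}=1$, so it has the same gap. The proposition needs to be corrected, not proved.
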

\begin{proof}
    The $(i,j)$ entry of $NN^{*}$ is $\sum_{k=1}^{n}n_{ik}n_{kj}^{*}$ and hence the only non-zero contributions are those for which $n_{ik}\neq 0$ and $n_{kj}^{*}\neq 0$.
    Assume first $i>1$.
    For $k\neq i$, $n_{ik}\neq 0$ whenever $i$ belongs to a minimum path between $1$ and $k$, in consequence, $d(1,i)<d(1,k)$ and $d(1,k)\neq 1$, therefore $n_{ki}^{*}=0$ and the only contribution for the $(i,i)$ entry is $n_{ii}n_{ii}^{*}=1$.
    Let $j\neq i$. 
    If $i$ lies in a minimum path joining $1$ and $j$, then $n_{ij}n_{jj}^{*}=1$ and consider $k'$ an adjacent vertex to $j$ such that $d(1,j)=1+d(1,k')$ then $n_{ik}=1$ and $n_{ik'}n_{k'j}=-1$ which annihilates $n_{ij}n_{jj}^{*}$.

    If $j$ has at least $3$ minimum paths joining it with $1$ and $d(1,i)=1+d(i,j)$ then $i$ belongs to a minimum path from $1$ to $j$, thus $n_{ii}n_{ij}^{*}=\ell_{j}-1$.
    For each of those minimum paths of $j$ we may take a vertex $k'$ with $d(k',j)=1$ and $d(1,j)=1+d(1,k')$, then $n_{ik}=1$ and $n_{k'j}^{*}=-1$.
    Since $i$ lies between vertex $j$ and $1$, $n_{ij}n_{jj}^{*}=1$. 

    For $j>1$, if $k$ is such that $n_{kj}^{*}=-1$ then is annihilated by $n_{jj}^{*}=1$.
    If $j$ has exactly 2 minimum paths to vertex $1$, then $n_{1j}=1$ and there are two adjacent vertices $k',k''$ to $j$ with $n_{k'j}^{*}=n_{k''j}^{*}=-1$ and together with $n_{jj}^{*}=1$ and finally the entry $(1,j)$ of $NN^{*}$ is 0. 
\end{proof}
\begin{corollary}
    The matrix $N$ is unimodular. That is, $\det(N)=\pm1$.
\end{corollary}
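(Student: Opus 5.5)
The corollary should follow directly from the preceding proposition. Since $N$ and $N^{*}$ are both integer matrices and $NN^{*}=I$, multiplicativity of the determinant gives
\[
\det(N)\det(N^{*})=\det(I)=1 .
\]
Both determinants are integers, so each must be a unit in $\mathbb{Z}$. Hence $\det(N)=\pm1$, and $N$ is unimodular with integer inverse $N^{*}$.

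The plan is as follows.

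First, I will observe that every entry of $N$ lies in $\{0,1\}$ by definition. Every entry of $N^{*}$ is one of $1,-1,\ell_j-1$ or $0$, and $\ell_j$ is a positive integer (a count of minimum paths), so all these values are integers. Thus both $\det(N)$ and $\det(N^{*})$ lie in $\mathbb{Z}$.

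Second, I will take determinants in the identity $NN^{*}=I$ supplied by the proposition. This gives $\det(N)\det(N^{*})=1$.

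Third, I will use that the only integer divisors of $1$ are $\pm1$. This forces $\det(N)\in\{1,-1\}$. Since $N$ has an integer two-sided inverse $N^{*}$ (for square matrices, a one-sided inverse is two-sided), $N$ is unimodular in the sense used throughout the paper.

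The only real dependency is the correctness of the proposition itself, which I take as given. Apart from that I expect no obstacle. The step to state carefully is the integrality of $N^{*}$, which needs $\ell_j$ to be an integer count, so $\ell_j-1\in\mathbb{Z}$.

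If one wished to avoid relying on the proposition, I would instead order the vertices by distance from vertex $1$. Every entry with $n_{ij}=1$ and $i\neq j$ has $d(1,i)<d(1,j)$, so in this ordering $N$ is unitriangular and $\det(N)=1$ directly. I would mention this as a remark only.
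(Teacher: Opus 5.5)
Your deduction is exactly the argument the paper intends: the corollary is stated without proof as an immediate consequence of $NN^{*}=I$, and going from $NN^{*}=I$ with $N,N^{*}$ integral to $\det(N)=\pm1$ is fine. The problem is the dependency you ``take as given'': the proposition is false as written.

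Take $K_{2,3}$ with parts $\{1,j\}$ and $\{a,b,c\}$, rooted at $1$, so $\ell_j=3$. Row $1$ of $N$ is all ones, so $(NN^{*})_{1j}$ is the $j$-th column sum of $N^{*}$. In that column $n^{*}_{jj}=1$. Also $n^{*}_{1j}=0$, because no nonzero case applies to $i=1$: $j$ is not adjacent to $1$, $d(1,1)=0\neq 1$, and $\ell_j\neq 2$. Each of $n^{*}_{aj},n^{*}_{bj},n^{*}_{cj}$ meets the hypotheses of both the $-1$ case and the $\ell_j-1=2$ case, so the definition is not even well posed there. Whatever values in $\{-1,2\}$ you choose, the column sum is one of $-2,1,4,7$, never $0$; the true inverse has $(1,j)$-entry $2$. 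So a proof routed through the proposition is not sound, even though it matches the paper.

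Your closing remark is the proof to use, and you should make it the main argument rather than a side comment. Define $i\preceq j$ iff $d(1,i)+d(i,j)=d(1,j)$. Then $n_{ij}=1$ exactly when $i\preceq j$, and $i\preceq j$ with $i\neq j$ forces $d(1,i)<d(1,j)$. Since $G$ is connected (as the paper assumes), $n_{jj}=1$ for all $j$. Relabel the vertices by nondecreasing distance from $1$; this is a simultaneous permutation of rows and columns, so it does not change the determinant. In that order $N$ is upper unitriangular, hence $\det(N)=1$. This argument is self-contained and gives the sharper value $\det(N)=1$. It also identifies $N^{-1}$ as the M\"obius function of the poset $(V,\preceq)$, which is automatically integral. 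That is precisely the matrix the proposition was trying, and failing, to describe.
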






\section*{Acknowledgement}
The research of C. A. Alfaro is partially supported by Sistema Nacional de Investigadoras e Investigadores grant number 220797.
The research of Jesús Uriel Medrano is partially supported by Secretaría de Ciencia, Humanidades, Tecnología e Innovación grant number 1343582.
The research of Iván Téllez Téllez is partially supported by Sistema Nacional de Investigadoras e Investigadores grant number 362694.

\bibliographystyle{plain}
\bibliography{bibliography}

@article{AVi,
  title={The structure of sandpile groups of outerplanar graphs},
  author={Alfaro, Carlos A and Villagr{\'a}n, Ralihe R},
  journal={Applied Mathematics and Computation},
  volume={395},
  pages={125861},
  year={2021},
  publisher={Elsevier}
}

@article {MR3083114,
    AUTHOR = {Liao, Yunhua and Fang, Aixiang and Hou, Yaoping},
     TITLE = {The {T}utte polynomial of an infinite family of outerplanar,
              small-world and self-similar graphs},
   JOURNAL = {Phys. A},
  FJOURNAL = {Physica A. Statistical Mechanics and its Applications},
    VOLUME = {392},
      YEAR = {2013},
    NUMBER = {19},
     PAGES = {4584--4593},
      ISSN = {0378-4371,1873-2119},
   MRCLASS = {05C31 (05C10)},
  MRNUMBER = {3083114},
       DOI = {10.1016/j.physa.2013.05.021},
       URL = {https://doi.org/10.1016/j.physa.2013.05.021},
}

@article {MR3046135,
    AUTHOR = {Comellas, Francesc and Miralles, Al\'icia and Liu, Hongxiao
              and Zhang, Zhongzhi},
     TITLE = {The number of spanning trees of an infinite family of
              outerplanar, small-world and self-similar graphs},
   JOURNAL = {Phys. A},
  FJOURNAL = {Physica A. Statistical Mechanics and its Applications},
    VOLUME = {392},
      YEAR = {2013},
    NUMBER = {12},
     PAGES = {2803--2806},
      ISSN = {0378-4371,1873-2119},
   MRCLASS = {05C82},
  MRNUMBER = {3046135},
       DOI = {10.1016/j.physa.2012.10.047},
       URL = {https://doi.org/10.1016/j.physa.2012.10.047},
}

@article {MR3479466,
    AUTHOR = {Krepkiy, I. A.},
     TITLE = {The sandpile groups of chain-cyclic graphs},
   JOURNAL = {Zap. Nauchn. Sem. S.-Peterburg. Otdel. Mat. Inst. Steklov.
              (POMI)},
  FJOURNAL = {Rossi\u iskaya Akademiya Nauk. Sankt-Peterburgskoe Otdelenie.
              Matematicheski\u i\ Institut im. V. A. Steklova. Zapiski
              Nauchnykh Seminarov (POMI)},
    VOLUME = {421},
      YEAR = {2014},
     PAGES = {94--112},
      ISSN = {0373-2703},
   MRCLASS = {05C25},
  MRNUMBER = {3479466},
       DOI = {10.1007/s10958-014-1961-5},
       URL = {https://doi.org/10.1007/s10958-014-1961-5},
}

@article {MR4023161,
    AUTHOR = {Chen, Haiyan and Mohar, Bojan},
     TITLE = {The sandpile group of a polygon flower},
   JOURNAL = {Discrete Appl. Math.},
  FJOURNAL = {Discrete Applied Mathematics. The Journal of Combinatorial
              Algorithms, Informatics and Computational Sciences},
    VOLUME = {270},
      YEAR = {2019},
     PAGES = {68--82},
      ISSN = {0166-218X,1872-6771},
   MRCLASS = {05C25 (05C10 05C20 05C30)},
  MRNUMBER = {4023161},
MRREVIEWER = {Timothy\ Y.\ Chow},
       DOI = {10.1016/j.dam.2019.07.020},
       URL = {https://doi.org/10.1016/j.dam.2019.07.020},
}

@article {MR3442497,
    AUTHOR = {Becker, Ryan and Glass, Darren B.},
     TITLE = {Cyclic critical groups of graphs},
   JOURNAL = {Australas. J. Combin.},
  FJOURNAL = {The Australasian Journal of Combinatorics},
    VOLUME = {64},
      YEAR = {2016},
     PAGES = {366--375},
      ISSN = {1034-4942,2202-3518},
   MRCLASS = {05C25 (05C50)},
  MRNUMBER = {3442497},
MRREVIEWER = {Norbert\ Seifter},
}

@article {MR1756151,
    AUTHOR = {Cori, Robert and Rossin, Dominique},
     TITLE = {On the sandpile group of dual graphs},
   JOURNAL = {European J. Combin.},
  FJOURNAL = {European Journal of Combinatorics},
    VOLUME = {21},
      YEAR = {2000},
    NUMBER = {4},
     PAGES = {447--459},
      ISSN = {0195-6698,1095-9971},
   MRCLASS = {05C25 (20K01)},
  MRNUMBER = {1756151},
       DOI = {10.1006/eujc.1999.0366},
       URL = {https://doi.org/10.1006/eujc.1999.0366},
}

@book {schrijver,
    AUTHOR = {Schrijver, Alexander},
     TITLE = {Theory of linear and integer programming},
    SERIES = {Wiley-Interscience Series in Discrete Mathematics},
      NOTE = {A Wiley-Interscience Publication},
 PUBLISHER = {John Wiley \& Sons, Ltd., Chichester},
      YEAR = {1986},
     PAGES = {xii+471},
      ISBN = {0-471-90854-1},
   MRCLASS = {90C05 (90C10)},
  MRNUMBER = {874114},
MRREVIEWER = {J\"urgen\ K\"ohler},
}

@book {cohen,
    AUTHOR = {Cohen, Henri},
     TITLE = {A course in computational algebraic number theory},
    SERIES = {Graduate Texts in Mathematics},
    VOLUME = {138},
 PUBLISHER = {Springer-Verlag, Berlin},
      YEAR = {1993},
     PAGES = {xii+534},
      ISBN = {3-540-55640-0},
   MRCLASS = {11Y40 (11Rxx 68Q40)},
  MRNUMBER = {1228206},
MRREVIEWER = {Joe\ P.\ Buhler},
       DOI = {10.1007/978-3-662-02945-9},
       URL = {https://doi.org/10.1007/978-3-662-02945-9},
}

@book {MR957919,
    AUTHOR = {Rotman, Joseph J.},
     TITLE = {An introduction to algebraic topology},
    SERIES = {Graduate Texts in Mathematics},
    VOLUME = {119},
 PUBLISHER = {Springer-Verlag, New York},
      YEAR = {1988},
     PAGES = {xiv+433},
      ISBN = {0-387-96678-1},
   MRCLASS = {55-01},
  MRNUMBER = {957919},
MRREVIEWER = {P.\ J.\ Kahn},
       DOI = {10.1007/978-1-4612-4576-6},
       URL = {https://doi.org/10.1007/978-1-4612-4576-6},
}

@book {MR256911,
    AUTHOR = {Harary, Frank},
     TITLE = {Graph theory},
 PUBLISHER = {Addison-Wesley Publishing Co., Reading, Mass.-Menlo Park,
              Calif.-London},
      YEAR = {1969},
     PAGES = {ix+274},
   MRCLASS = {05.40},
  MRNUMBER = {256911},
MRREVIEWER = {M.\ E.\ Watkins},
}

@article {MR5026280,
    AUTHOR = {Alfaro, Carlos A. and Medrano, Jes\'us Uriel and T\'ellez
              T\'ellez, Iv\'an},
     TITLE = {The {S}mith normal form of distance matrices of high
              dimensional trees},
   JOURNAL = {Comput. Appl. Math.},
  FJOURNAL = {Computational \& Applied Mathematics},
    VOLUME = {45},
      YEAR = {2026},
    NUMBER = {6},
     PAGES = {Paper No. 226, 12},
      ISSN = {2238-3603,1807-0302},
   MRCLASS = {05C50 (05E45 57M15 57Q05)},
  MRNUMBER = {5026280},
       DOI = {10.1007/s40314-025-03618-9},
       URL = {https://doi.org/10.1007/s40314-025-03618-9},
}

@article{aa,
  title={Enumeration of cospectral and coinvariant graphs},
  author={Abiad, Aida and Alfaro, Carlos A.},
  journal={Applied Mathematics and Computation},
  volume={408},
  pages={126348},
  year={2021},
  publisher={Elsevier}
}

@article {MR4131346,
    AUTHOR = {Lim, Lek-Heng},
     TITLE = {Hodge {L}aplacians on graphs},
   JOURNAL = {SIAM Rev.},
  FJOURNAL = {SIAM Review},
    VOLUME = {62},
      YEAR = {2020},
    NUMBER = {3},
     PAGES = {685--715},
      ISSN = {1095-7200,0036-1445},
   MRCLASS = {58A14 (05C50 20G10)},
  MRNUMBER = {4131346},
       DOI = {10.1137/18M1223101},
       URL = {https://doi.org/10.1137/18M1223101},
}

@article {MR4783080,
    AUTHOR = {Ribando-Gros, Emily and Wang, Rui and Chen, Jiahui and Tong,
              Yiying and Wei, Guo-Wei},
     TITLE = {Combinatorial and {H}odge {L}aplacians: similarities and
              differences},
   JOURNAL = {SIAM Rev.},
  FJOURNAL = {SIAM Review},
    VOLUME = {66},
      YEAR = {2024},
    NUMBER = {3},
     PAGES = {575--601},
      ISSN = {1095-7200,0036-1445},
   MRCLASS = {05C50 (58A14)},
  MRNUMBER = {4783080},
MRREVIEWER = {J\'ozef\ Dodziuk},
       DOI = {10.1137/22M1482299},
       URL = {https://doi.org/10.1137/22M1482299},
}

@article {MR4925290,
    AUTHOR = {Fan, Yi-Zheng and Wu, Hui-Feng and Wang, Yi},
     TITLE = {The largest {L}aplacian eigenvalue and the balancedness of
              simplicial complexes},
   JOURNAL = {J. Algebraic Combin.},
  FJOURNAL = {Journal of Algebraic Combinatorics. An International Journal},
    VOLUME = {61},
      YEAR = {2025},
    NUMBER = {4},
     PAGES = {Paper No. 53, 21},
      ISSN = {0925-9899,1572-9192},
   MRCLASS = {05E45 (05C65 47J10 55U05)},
  MRNUMBER = {4925290},
MRREVIEWER = {Yilun\ Shang},
       DOI = {10.1007/s10801-025-01419-1},
       URL = {https://doi.org/10.1007/s10801-025-01419-1},
}

@article {MR3077874,
    AUTHOR = {Horak, Danijela and Jost, J\"urgen},
     TITLE = {Spectra of combinatorial {L}aplace operators on simplicial
              complexes},
   JOURNAL = {Adv. Math.},
  FJOURNAL = {Advances in Mathematics},
    VOLUME = {244},
      YEAR = {2013},
     PAGES = {303--336},
      ISSN = {0001-8708,1090-2082},
   MRCLASS = {55U10 (18G30 31C20)},
  MRNUMBER = {3077874},
MRREVIEWER = {Paul\ G.\ Goerss},
       DOI = {10.1016/j.aim.2013.05.007},
       URL = {https://doi.org/10.1016/j.aim.2013.05.007},
}

@article {MR5088845,
    AUTHOR = {Chen, Xiaodan and Kou, Yongfang},
     TITLE = {Bounding the largest eigenvalue of signless {L}aplace operator
              on simplicial complexes},
   JOURNAL = {Graphs Combin.},
  FJOURNAL = {Graphs and Combinatorics},
    VOLUME = {42},
      YEAR = {2026},
    NUMBER = {4},
     PAGES = {Paper No. 57},
      ISSN = {0911-0119,1435-5914},
   MRCLASS = {05E45},
  MRNUMBER = {5088845},
       DOI = {10.1007/s00373-026-03055-3},
       URL = {https://doi.org/10.1007/s00373-026-03055-3},
}

@article {MR4061004,
    AUTHOR = {Pinheiro, Luc\'elia K. and Souza, Bruna S. and Trevisan,
              Vilmar},
     TITLE = {Determining graphs by the complementary spectrum},
   JOURNAL = {Discuss. Math. Graph Theory},
  FJOURNAL = {Discussiones Mathematicae. Graph Theory},
    VOLUME = {40},
      YEAR = {2020},
    NUMBER = {2},
     PAGES = {607--620},
      ISSN = {1234-3099,2083-5892},
   MRCLASS = {05C50},
  MRNUMBER = {4061004},
MRREVIEWER = {John\ T.\ Saccoman},
       DOI = {10.7151/dmgt.2280},
       URL = {https://doi.org/10.7151/dmgt.2280},
}

@misc{fallat2024minimumnumberdistincteigenvalues,
      title={Minimum number of distinct eigenvalues of distance-regular and signed Johnson graphs}, 
      author={Shaun Fallat and Himanshu Gupta and Allen Herman and Johnna Parenteau},
      year={2024},
      eprint={2411.00250},
      archivePrefix={arXiv},
      primaryClass={math.CO},
      url={https://arxiv.org/abs/2411.00250}, 
}

@article{ah,
  title={Cospectrality of graphs with respect to distance matrices},
  author={Aouchiche, Mustapha and Hansen, Pierre},
  journal={Applied Mathematics and Computation},
  volume={325},
  pages={309--321},
  year={2018},
  publisher={Elsevier}
}

@article{az,
    author = {Alfaro, Carlos A. and Zapata, Octavio},
    title = {The degree-distance and transmission-adjacency matrices},
    journal = {Computational and Applied Mathematics},
    volume = {43},
    year = {2024},
    pages = {351}
}

@article{bs,
  title={Cospectral graphs on 12 vertices},
  author={Brouwer, Andries E. and Spence, Edward},
  journal={Electronic Journal of Combinatorics},
  volume={16},
  number={1},
  pages={N20},
  year={2009}
}

@inproceedings{gm,
  title={Some computational results on the spectra of graphs},
  author={Godsil, Chris and McKay, Brendan},
  booktitle={Combinatorial Mathematics IV: Proceedings of the Fourth Australian Conference Held at the University of Adelaide August 27--29, 1975},
  pages={73--92},
  year={1976},
  organization={Springer}
}

@article{HS2004,
  title={Enumeration of cospectral graphs},
  author={Haemers, Willem H. and Spence, Edward},
  journal={European Journal of Combinatorics},
  volume={25},
  number={2},
  pages={199--211},
  year={2004},
  publisher={Elsevier}
}

@article{HW,
  title={Distance unimodular equivalence of graphs},
  author={Hou, Yaoping and Woo, Chingwah},
  journal={Linear and Multilinear Algebra},
  volume={56},
  number={6},
  pages={611--626},
  year={2008},
  publisher={Taylor \& Francis}
}

@article{kannan,
  title={Polynomial algorithms for computing the Smith and Hermite normal forms of an integer matrix},
  author={Kannan, Ravindran and Bachem, Achim},
  journal={SIAM Journal on Computing},
  volume={8},
  number={4},
  pages={499--507},
  year={1979},
  publisher={SIAM}
}

@book{Klivans,
  title={The Mathematics of Chip-firing},
  author={Klivans, Caroline J.},
  year={2018},
  publisher={CRC Press, Taylor \& Francis Group}
}

@article{stanley,
  title={Smith normal form in combinatorics},
  author={Stanley, Richard P.},
  journal={Journal of Combinatorial Theory, Series A},
  volume={144},
  pages={476--495},
  year={2016},
  publisher={Elsevier}
}

@misc{sage,
    author={The developers}, 
    title={{S}age {T}utorial ({R}elease 9.5)},
    year={2022},
    note={Located at: http://www.sagemath.org}
}

@article{vince,
  title={Elementary divisors of graphs and matroids},
  author={Vince, Andrew},
  journal={European Journal of Combinatorics},
  volume={12},
  number={5},
  pages={445--453},
  year={1991},
  publisher={Elsevier}
}

@article {GP,
    AUTHOR = {Graham, R. L. and Pollak, H. O.},
     TITLE = {On the addressing problem for loop switching},
   JOURNAL = {Bell System Tech. J.},
  FJOURNAL = {The Bell System Technical Journal},
    VOLUME = {50},
      YEAR = {1971},
     PAGES = {2495--2519},
      ISSN = {0005-8580},
   MRCLASS = {94.30},
  MRNUMBER = {289210},
MRREVIEWER = {M.\ A.\ Harrison},
       DOI = {10.1002/j.1538-7305.1971.tb02618.x},
       URL = {https://doi.org/10.1002/j.1538-7305.1971.tb02618.x},
}

@article {GL,
    AUTHOR = {Graham, R. L. and Lov\'asz, L.},
     TITLE = {Distance matrix polynomials of trees},
   JOURNAL = {Adv. in Math.},
  FJOURNAL = {Advances in Mathematics},
    VOLUME = {29},
      YEAR = {1978},
    NUMBER = {1},
     PAGES = {60--88},
      ISSN = {0001-8708},
   MRCLASS = {05C05},
  MRNUMBER = {480119},
MRREVIEWER = {R.\ A.\ Melter},
       DOI = {10.1016/0001-8708(78)90005-1},
       URL = {https://doi.org/10.1016/0001-8708(78)90005-1},
}

@book {MR755006,
    AUTHOR = {Munkres, James R.},
     TITLE = {Elements of algebraic topology},
 PUBLISHER = {Addison-Wesley Publishing Company, Menlo Park, CA},
      YEAR = {1984},
     PAGES = {ix+454},
      ISBN = {0-201-04586-9},
   MRCLASS = {55-01},
  MRNUMBER = {755006},
MRREVIEWER = {Christopher\ W.\ Stark},
}

@book {MR357214,
    AUTHOR = {Harary, Frank and Palmer, Edgar M.},
     TITLE = {Graphical enumeration},
 PUBLISHER = {Academic Press, New York-London},
      YEAR = {1973},
     PAGES = {xiv+271},
   MRCLASS = {05C30},
  MRNUMBER = {357214},
MRREVIEWER = {R.\ W.\ Robinson},
}

@article {MR228355,
    AUTHOR = {Harary, Frank and Palmer, Edgar M.},
     TITLE = {On acyclic simplicial complexes},
   JOURNAL = {Mathematika},
  FJOURNAL = {Mathematika. A Journal of Pure and Applied Mathematics},
    VOLUME = {15},
      YEAR = {1968},
     PAGES = {115--122},
      ISSN = {0025-5793},
   MRCLASS = {05.10},
  MRNUMBER = {228355},
MRREVIEWER = {M.\ D.\ Plummer},
       DOI = {10.1112/S002557930000245X},
       URL = {https://doi.org/10.1112/S002557930000245X},
}

@article {MR3007180,
    AUTHOR = {Gainer-Dewar, Andrew},
     TITLE = {{$\Gamma$}-species and the enumeration of {$k$}-trees},
   JOURNAL = {Electron. J. Combin.},
  FJOURNAL = {Electronic Journal of Combinatorics},
    VOLUME = {19},
      YEAR = {2012},
    NUMBER = {4},
     PAGES = {Paper 45, 33},
      ISSN = {1077-8926},
   MRCLASS = {05C30 (05E18)},
  MRNUMBER = {3007180},
       DOI = {10.37236/2615},
       URL = {https://doi.org/10.37236/2615},
}

@article {MR3213312,
    AUTHOR = {Gainer-Dewar, Andrew and Gessel, Ira M.},
     TITLE = {Counting unlabeled {$k$}-trees},
   JOURNAL = {J. Combin. Theory Ser. A},
  FJOURNAL = {Journal of Combinatorial Theory. Series A},
    VOLUME = {126},
      YEAR = {2014},
     PAGES = {177--193},
      ISSN = {0097-3165,1096-0899},
   MRCLASS = {05C30 (05C05 05C15)},
  MRNUMBER = {3213312},
MRREVIEWER = {Edward\ A.\ Bender},
       DOI = {10.1016/j.jcta.2014.05.002},
       URL = {https://doi.org/10.1016/j.jcta.2014.05.002},
}

@article {MR427152,
    AUTHOR = {Dewdney, A. K. and Harary, Frank},
     TITLE = {The adjacency graphs of a complex},
   JOURNAL = {Czechoslovak Math. J.},
  FJOURNAL = {Czechoslovak Mathematical Journal},
    VOLUME = {26(101)},
      YEAR = {1976},
    NUMBER = {1},
     PAGES = {137--144},
      ISSN = {0011-4642},
   MRCLASS = {05C99},
  MRNUMBER = {427152},
MRREVIEWER = {Richard\ A.\ Duke},
}

@article {MR4742211,
    AUTHOR = {Arseneva, Elena and Kleist, Linda and Klemz, Boris and
              L\"offler, Maarten and Schulz, Andr\'e{} and Vogtenhuber,
              Birgit and Wolff, Alexander},
     TITLE = {Adjacency graphs of polyhedral surfaces},
   JOURNAL = {Discrete Comput. Geom.},
  FJOURNAL = {Discrete \& Computational Geometry. An International Journal
              of Mathematics and Computer Science},
    VOLUME = {71},
      YEAR = {2024},
    NUMBER = {4},
     PAGES = {1429--1455},
      ISSN = {0179-5376,1432-0444},
   MRCLASS = {05C10 (05C42 05C62 68R10)},
  MRNUMBER = {4742211},
MRREVIEWER = {Joanna\ A.\ Ellis-Monaghan},
       DOI = {10.1007/s00454-023-00537-6},
       URL = {https://doi.org/10.1007/s00454-023-00537-6},
}

@misc{oeis,
    Author = {{OEIS Foundation Inc.}},
    Note = {Published electronically at \url{http://oeis.org}},
    Title = {The {O}n-{L}ine {E}ncyclopedia of {I}nteger {S}equences},
    Year = 2025}

\appendix

\section{Code for computing the $k$-{\it th} Laplacian matrix and the $k$-{\it th} auxiliar graph of a simplicial complex}\label{app:lapmatauxgra}

\begin{lstlisting}
def d_laplacian_matrix(S,l):
    V = S.n_faces(l-1)
    V.sort()
    print("Vertices of the " + str(l) + "-th adjacency graph")
    print(V)
    E = S.n_faces(l)
    nV = len(V)
    nE = len(E)
    A = Matrix(nV,nV)
    for i in range(nV):
        for j in range(i+1,nV):
            flag = False
            for e in E:
                if set(V[i]).issubset(e) and set(V[j]).issubset(e):
                    flag = True
                    break
            if flag:
                A[i,j] = 1
                A[j,i] = 1
    H = Graph(A, format="adjacency_matrix")
    print("The " + str(l) + "-th adjacency graph")
    H.show()
    L = H.laplacian_matrix()
    print("The " + str(l) + "-th Laplacian matrix")
    print(L)
    print("The SNF of the " + str(l) + "-th Laplacian matrix")
    print(L.elementary_divisors())
\end{lstlisting}

\section{Code to compute the simplicial annulus $O_n$}\label{sec:annulussage}
\begin{lstlisting}
def simplicial_annulus(n):
    C1 = graphs.CycleGraph(n)
    C2 = graphs.CycleGraph(2*n)
    G = C1.disjoint_union(C2)
    for i in range(n):
        if 2*i-1 < 0:
            for j in [2*n-1,2*i,2*i+1]:
                G.add_edge(((0,i),(1,j)))
        else:
            for j in [2*i-1,2*i,2*i+1]:
                G.add_edge(((0,i),(1,j)))
    d = {v:n*v[0]+v[1] for v in G.vertices()}
    G.relabel(d)
    L = list(sage.graphs.cliquer.all_cliques(G, 3, 3))
    if n == 3:
        L.remove([0,1,2])
    return SimplicialComplex(L)
\end{lstlisting}

\section{Code to generate all $k$-trees with $n$ vertices}\label{sec:generation of k trees}
\begin{lstlisting}
def k_trees(k,nmax):
    G = [[graphs.CompleteGraph(k)]]
    for n in range(nmax-k):
        L = []
        for g in G[n]:
            for clique in sage.graphs.cliquer.all_cliques(g,k,k):
                h = g.copy()
                h.add_vertex()
                h.add_edges([(g.order(),v) for v in clique])
                bandera = True
                for l in L:
                    if h.is_isomorphic(l):
                        bandera = False
                        break
                if bandera:
                    L.append(h)
        G.append(L)
    return G
\end{lstlisting}

\section{Code to compute the SNF of the $d$-Laplacian matrix of a $k$-tree}\label{sec:k Laplacian matrix}
\begin{lstlisting}
def d_laplacian_matrix(g,l):
    V = list(sage.graphs.cliquer.all_cliques(g,l,l))
    E = list(sage.graphs.cliquer.all_cliques(g,l+1,l+1))
    nV = len(V)
    nE = len(E)
    A = Matrix(nV,nV)
    for i in range(nV):
        for j in range(i+1,nV):
            flag = False
            for e in E:
                if set(V[i]).issubset(e) and set(V[j]).issubset(e):
                    flag = True
                    break
            if flag:
                A[i,j] = 1
                A[j,i] = 1
    H = Graph(A, format="adjacency_matrix")
    L = H.laplacian_matrix()
    print("d = " + str(l) + ", SNF of L^d(T): " + str(L.elementary_divisors()))
\end{lstlisting}

\end{document}